\newtheorem{theorem}{Theorem}[section]
\newenvironment{proof}{\textbf{Proof:} }{\text{q.e.d.}}
\newtheorem{assumption}[theorem]{Assumption}
\newtheorem{proposition}[theorem]{Proposition}
{\theorembodyfont{\rmfamily}
\newtheorem{notation}[theorem]{Notation}

\newtheorem{definition}[theorem]{Definition}

\newtheorem{lemma}[theorem]{Lemma}
\newtheorem{corollary}[theorem]{Corollary}
\newtheorem{remark}[theorem]{Remark}

}
\newcommand{\ncmd}{\newcommand}
\ncmd{\be}{\begin{enumerate}}
\ncmd{\ee}{\end{enumerate}}
\ncmd{\bi}{\begin{itemize}}
\ncmd{\ei}{\end{itemize}}
\ncmd{\beq}{\begin{equation}}
\ncmd{\eeq}{\end{equation}}
\ncmd{\lra}{\mbox{$\longrightarrow$}}
\ncmd{\Lra}{\mbox{$\Longrightarrow$}}
\ncmd{\ura}{\underrightarrow}
\ncmd{\ua}{\mbox{$\uparrow$}}
\ncmd{\da}{\mbox{$\downarrow$}}
\ncmd{\hra}{\mbox{$\hookrightarrow$}}
\ncmd{\ra}{\mbox{$\rightarrow$}}
\ncmd{\Ra}{\mbox{$\Rightarrow$}}
\ncmd{\eqa}{\mbox{$\leftrightarrow$}}
\ncmd{\Eqa}{\mbox{$\Leftrightarrow$}}
\ncmd{\leqa}{\mbox{$\longleftrightarrow$}}
\ncmd{\Leqa}{\mbox{$\Longleftrightarrow$}}
\ncmd{\iso}{\mbox{$\tilde{\ra}$}}
\ncmd{\liso}{\mbox{$\tilde{\lra}$}}
\ncmd{\Reinfty}{\mbox{$\mathbb{R}\cup\{\infty\}$}}
\ncmd{\bbN}{\mathbb{N}}
\ncmd{\bbZ}{\mbox{$\mathbb{Z}$}}
\ncmd{\bbR}{\mbox{$\mathbb{R}$}}
\ncmd{\ggT}{\operatorname{ggT}}
\ncmd{\Gcd}[2]{\mathop{\text{gcd}(#1,#2)}}
\ncmd{\mf}[1]{\mathfrak{#1}}
\ncmd{\calli}[1]{\mathcal{#1}}
\ncmd{\Rcal}{\calli{R}}
\ncmd{\Scal}{\calli{S}}
\ncmd{\gfrak}{\mf{g}}
\ncmd{\hfrak}{\mf{h}}
\ncmd{\ti}[1]{\tilde{#1}}
   \ncmd{\tiV}{\ti{V}}\ncmd{\tiD}{\ti{D}}\ncmd{\tisigma}{\ti{\sigma}}
   \ncmd{\tij}{\ti{j}}\ncmd{\tii}{\ti{i}}
   \ncmd{\titfh}{\ti{\mf{h}}}\ncmd{\titfgt}{\ti{\mf{g}}}
   \ncmd{\tiJ}{\ti{J}}\ncmd{\tiJplus}{\ti{J}^+}\ncmd{\tiG}{\ti{G}}
   \ncmd{\tibG}{\mathbf{\ti{G}}} \ncmd{\tibH}{\mathbf{\ti{H}}}
\ncmd{\leftexp}[2]{{\vphantom{#2}}^{#1}{#2}}
\ncmd{\graded}[2]{\mbox{$\mf{gr}_{#1}(#2):=\oplus_i({#1}^i{#2}/{#1}^{i+1}{#2})$}}
\ncmd{\umin}[1]{\underset{#1}{\min}}
\ncmd{\uinf}[1]{\underset{#1}{\inf}}
\ncmd{\usup}[1]{\underset{#1}{\sup}}
\ncmd{\umax}[1]{\underset{#1}{\max}}
\ncmd{\uomin}[2]{\underset{#1}{\overset{#2}{\min}}}
\ncmd{\uoinf}[2]{\underset{#1}{\overset{#2}{\inf}}}
\ncmd{\uosup}[2]{\underset{#1}{\overset{#2}{\sup}}}
\ncmd{\uomax}[2]{\underset{#1}{\overset{#2}{\max}}}
\ncmd{\sequence}[1]{\mbox{$({#1}_n)_\mathbb{N}$}}
\ncmd{\pF}{\mf{p}_F}
\ncmd{\oK}{\mbox{$o_K$}}
\ncmd{\pK}{\mbox{$\mf{p}_K$}}
\ncmd{\pD}{\mbox{$\mf{p}_D$}}
\ncmd{\pFpow}[1]{\mbox{$\mf{p}_F^{#1}$}}
\ncmd{\pDpow}[1]{\mbox{$\mf{p}_D^{#1}$}}
\ncmd{\oDelta}{\mbox{$o_{\Delta}$}}
\ncmd{\pDelta}{\mbox{$\mf{p}_{\Delta}$}}
\ncmd{\vr}[1]{\mbox{$o_{#1}$}}
\ncmd{\vi}[1]{\mbox{$\mf{p}_{#1}$}}
\ncmd{\vipower}[2]{\mbox{$\mf{p}_{#1}^{#2}$}}
\ncmd{\Char}{\operatorname{char}}
\ncmd{\Building}{\mf{B}^1}
\ncmd{\building}{\mf{B}}
\ncmd{\Ical}{\mathcal{I}}
\ncmd{\IAE}{\mbox{$\mathcal{I}^{E^{\times}}$}}
\ncmd{\IB}{\mbox{$\mathcal{I}_E$}}
\ncmd{\Om}[1]{\mbox{$\Omega_{#1}$}}
\ncmd{\OmE}[1]{\mbox{$\Omega_{E,{#1}}$}}
\ncmd{\bary}{\mathop{bary}}
\ncmd{\typ}{\operatorname{typ}}
\ncmd{\Stern}{\operatorname{Stern}}
\ncmd{\Fix}{\operatorname{Fix}}
\ncmd{\diam}{\operatorname{diam}}
\ncmd{\Norm}[2]{\operatorname{Norm}_{#1}(#2)}
\ncmd{\Normone}[2]{\operatorname{Norm}^1_{#1}(#2)}
\ncmd{\Normtwo}[2]{\operatorname{Norm}^2_{#1}(#2)}
\ncmd{\ord}{\oparatorname{ord}}
\ncmd{\Ord}{\operatorname{Ord}}
\ncmd{\lattices}[1]{\operatorname{lattices}({#1})}
\ncmd{\rad}[1]{\operatorname{rad}({#1})}
\ncmd{\Her}[1]{\operatorname{Her}({#1})}
\ncmd{\Latt}[2]{\operatorname{Latt}_{#1}(#2)}
\ncmd{\Gitter}[2]{\operatorname{Latt}(#2,#1)}
\ncmd{\LC}{\operatorname{LC}}
\ncmd{\Lattone}[2]{\operatorname{Latt}^1_{#1}(#2)}
\ncmd{\Latttwo}[2]{\operatorname{Latt}^2_{#1}(#2)}
\ncmd{\Inn}{\operatorname{Inn}}
\ncmd{\Centr}{\operatorname{Z}}
\ncmd{\Z}{\operatorname{Z}}
\ncmd{\lmult}{\operatorname{l}}
\ncmd{\rmult}{\operatorname{r}}
\ncmd{\ind}{\operatorname{ind}}
\ncmd{\Gal}{\mathop{\text{Gal}}}
\ncmd{\LietiG}{\mbox{$\mathop{Lie}(\tilde{G})$}}
\ncmd{\LF}{\operatorname{LF}}
\ncmd{\gyr}{\mf{g}_{x,r}}
\ncmd{\hxr}{\mf{h}_{x,r}}
\ncmd{\Matr}{\operatorname{M}}
\ncmd{\trd}{\operatorname{trd}}
\ncmd{\Nrd}{\operatorname{Nrd}}
\ncmd{\trace}{\operatorname{tr}}
\ncmd{\Gram}{\operatorname{Gram}}
\ncmd{\gr}{\operatorname{\mf{gr}}}
\ncmd{\degree}{\operatorname{deg}}
\ncmd{\Skew}{\operatorname{Skew}}
\ncmd{\Sym}{\operatorname{Sym}}
\ncmd{\diag}{\operatorname{diag}}
\ncmd{\antidiag}{\operatorname{antidiag}}
\ncmd{\EMatr}[1]{\mathds{1}_{#1}}
\ncmd{\Mint}[3]{\mathop{\text{M}_{#1,#2}(#3)}}
\ncmd{\trans}[1]{{#1}^{\ensuremath{\mathsf{T}}}} 
\ncmd{\Lie}{\operatorname{Lie}}
\ncmd{\Abb}{\operatorname{Abb}}
\ncmd{\Top}{\operatorname{Top}}
\ncmd{\Hom}{\operatorname{Hom}}
\ncmd{\End}{\operatorname{End}}
\ncmd{\Aut}{\operatorname{Aut}}
\ncmd{\Iso}{\operatorname{Iso}}
\ncmd{\im}{\operatorname{im}}
\ncmd{\rang}{\operatorname{rang}}
\ncmd{\id}{\operatorname{id}}
\ncmd{\Span}{\operatorname{span}}
\ncmd{\tens}[3]{{#1}\otimes_{#2}{#3}}
\ncmd{\proj}{\operatorname{proj}}
\ncmd{\PO}{\operatorname{PO}}
\ncmd{\SL}{\text{SL}}
\ncmd{\GL}{\text{GL}}
\ncmd{\Sp}{\operatorname{Sp}}
\ncmd{\SO}{\operatorname{SO}}
\ncmd{\Ogp}{\operatorname{O}}
\ncmd{\Rad}{\operatorname{Rad}}
\ncmd{\nr}{\operatorname{nr}}
\ncmd{\GLDV}{\mbox{$\GL_DV$}}
\ncmd{\SLDV}{\mbox{$\SL_DV$}}
\ncmd{\U}{\operatorname{U}}
\ncmd{\SU}{\operatorname{SU}}
\ncmd{\AfSp}[2]{\operatorname{\mathbf{A}}^{#1}_{K}}
\ncmd{\Res}{\operatorname{Res}}
\ncmd{\bV}{\operatorname{\mathbf{V}}}
\ncmd{\bA}{\operatorname{\mathbf{A}}}
\ncmd{\bGm}{\operatorname{\mathbf{G}_{\mathbf{m}}}}
\ncmd{\bG}{\operatorname{\mathbf{G}}}
\ncmd{\bH}{\operatorname{\mathbf{H}}}
\ncmd{\bMatr}{\operatorname{\mathbf{M}}}
\ncmd{\bU}{\operatorname{\mathbf{U}}}
\ncmd{\bSU}{\operatorname{\mathbf{SU}}}
\ncmd{\bSL}{\operatorname{\mathbf{SL}}}
\ncmd{\bO}{\operatorname{\mathbf{O}}}
\ncmd{\bSO}{\operatorname{\mathbf{SO}}}
\ncmd{\bSp}{\operatorname{\mathbf{Sp}}}
\ncmd{\bGL}{\operatorname{\mathbf{GL}}}
\ncmd{\bT}{\operatorname{\mathbf{T}}}
\ncmd{\bAfSp}[1]{\operatorname{\mathbf{A}^{#1}}}
\ncmd{\Weylgroup}{\mbox{$\leftexp{v}{W}$}}
\ncmd{\Osheaf}[1]{\mbox{$\mf{O}_{#1}$}}
\ncmd{\RingedSpace}[1]{\mbox{$(#1,\Osheaf{#1})$}}
\ncmd{\maximalIdeal}[1]{\mbox{$\mf{m}_{#1}$}}
\ncmd{\Invariante}[2]{\mathop{\text{Iv}({#1},{#2})}}
\ncmd{\pairs}{\mathop{\text{pairs}}}
\ncmd{\zzmatrix}[4]{\left( \begin{array}{cc}#1&#2\\#3&#4 \end{array}\right)}
\title{The centralizer of a classical group and Bruhat--Tits buildings}
\author{Daniel Skodlerack} 
\begin{document}

\maketitle

\begin{abstract}
Let $G$ be a unitary group defined over a non-Archimedean local field of odd residue characteristic and let $H$ be the centralizer of a semisimple rational  Lie algebra element of $G.$ We prove that the Bruhat--Tits building $\Building(H)$ of $H$ can be affinely and $G$-equivariantly embedded in the Bruhat--Tits building $\Building(G)$ of $G$ so that the Moy--Prasad filtrations are preserved. The latter property forces uniqueness in the following way. Let
$j$ and $j'$ be maps from $\Building(H)$ to $\Building(G)$ which preserve the Moy--Prasad filtrations. We prove that if there is no split torus in the center
of the connected component of $H$ then $j$ and $j'$ are equal, and in general if both maps are affine and satisfy a mild equivariance condition they differ up to a 
translation of $\Building(H).$ 
\end{abstract}
\section{Introduction}
The subject of this article is a functoriality question for maps between Bruhat--Tits buidlings which is connected with the representation theory of classical groups. Embeddings of buildings of reductive groups have previously been studied for example by Landvogt \cite{landvogt:00} and Prasad and Yu \cite{prasadYu:02}. The aim of this article is to show to what extend the property of preserving the Moy--Prasad filtrations determines the 
choice of an embedding uniquely. It completes recent works of Broussous, Lemaire and 
Stevens \cite{broussousLemaire:02}, \cite{broussousStevens:09}, which have applications in representation theory. 

More precisely Bushnell's and Kutzko's strategy in their theory of types for the classification of  irreducible smooth representations of $\GL_n(k)$ in \cite{bushnellKutzko:93} is applied to other classical groups defined over a non-Archimedean local field $k.$ In \cite{secherreI:04}, \cite{secherreII:05}, \cite{secherreIII:05}, \cite{secherreStevensIV:08}, \cite{broussousSecherreStevens:10} and \cite{secherreStevensVI:10}  S\'echerre together with Broussous and Stevens gave the classification for $\GL_n(D),$ where $D$ is a central finite division algebra over $k.$ Further in \cite{stevens:05} Stevens constructed types for unitary groups. He applied a result of his paper with Broussous \cite{broussousStevens:09}, i.e. he used an embedding of Bruhat--Tits buildings for a certain subgroup of a unitary group to apply an induction. The important property of this map is the compatibility with the Lie algebra filtrations (\textbf{CLF}) which by \cite{lemaire:09} correspond to the Moy--Prasad filtrations \cite{moyPrasad:94}. 
In \cite{broussousStevens:09} the quaternion algebra case is missing and the authors proposed a uniqueness and generalization conjecture to the reader. 

Let $k$ be a non-Archimedean local field with valuation $\nu$ and of residue characteristic $\neq 2,$ and let $D$ be a division algebra central and finite-dimensional over $k$ equipped with an involution $\rho,$ whose set of fixed points in $k$ is denoted by $k_0.$ To the classical group $\bG:=\bU(h),$ i.e. the unitary group of an $\epsilon$-hermitian form $h$ on a finite-dimensional right $D$-vector space $V,$ is attached the Bruhat--Tits building $\Building(\bG,k_0)$ which can be described in terms of lattice functions. To every point of $\Building(\bG,k_0)$ there is attached a Lie algebra filtration $\mf{g}_x$ which is exactly the Moy--Prasad filtration. More precisely if $x$ is a point of $\Building(\bG,k_0)$ seen as a lattice function it can be interpreted 
as a point of $\building(\bGL_D(V),k)$ which has a Lie algebra filtration $\ti{\mf{g}}_x$ in $\End_D(V).$ If one identifies $\Lie(\bG)(k_0)$ with the set of skew-symmetric elements of $\End_D(V)$ with respect to the adjoint involution of $h,$ the filtration $\mf{g}_x$ of $x$ coincides with 
\[t\mapsto \ti{\mf{g}}_x(t)\cap \Lie(\bG)(k_0).\]
Now we take an element $\beta\in\Lie(\bG)(k_0)$ whose $k$-algebra $k[\beta]$ is a product of field extensions $E_i$ of $k.$ In this introduction let us assume $k[\beta]$ to be separable over $k.$ 

Its centralizer in $\bG$ is an algebraic group $\bH$ defined over $k_0$ which is a product of restrictions of scalars to $k_0$ of classical groups $\bH_i$ which are either general linear groups or unitary groups. In the manner of \cite{broussousStevens:09} we prove the existence of an injective affine $\bH(k_0)$-equivariant and toral map $j_{\beta}$ from $\Building(\bH,k_0)$ to $\Building(\bG,k_0)$ using lattice function models.
In addition $j_{\beta}$ has the CLF-property,  i.e. 
\[\Lie(\bH)(k_0)\cap \mf{g}_{j_{\beta}(y)}\]
is the Lie algebra flitration $\mf{h}_x$ of $x.$ This article solves the following problem:
To what extent does the CLF-property determine $j_{\beta}?$

In order to give an answer to this question we consider two cases.
\begin{enumerate}
 \item[(a)] If only unitary groups appear among the $\bH_i,$ none of which is $k_0$-isomorphic to the isotropic orthogonal group of 
rank one, then $j_{\beta}$ is uniquely determined by CLF.
 \item[(b)] If there are no restrictions on the $\bH_i$ then $j_{\beta}$ is unique up to a translation of $\Building(\bH,k_0)$
in being affine, $\Centr(\bH^0(k_0))$-equivariant and having the CLF-property.
\end{enumerate}

For the proofs we use the decomposition of $k[\beta]$ as a product of fields to restrict to the cases where $k[\beta]$ is a field or a product of two fields interchanged by the adjoint involution of $h.$ Let us call these cases atomic cases. The map $j_{\beta}$ is now induced by $j_E^{-1}$ constructed in \cite{broussousLemaire:02}.  
In (a) $k[\beta]$ is a field in the atomic case. If $\beta$ is non-zero we follow the strategy of \cite{broussousStevens:09}. The statement follows essentially from a uniqueness statement for $j_E^{-1}$ given in \cite[10.3]{broussousStevens:09}.
If $\beta$ is zero we prove that for all unitary groups except for the  isotropic orthogonal group of rank one the Moy--Prasad filtration determines the point completely.
For (b) to restrict to the atomic case we need further a rigidity proposition for Euclidean buildings, stated in \ref{propRigidityForAffineBuildings}. We use 
(a) to finish the proof of (b).

The whole strategy and the proofs do not require $\beta$ to be separable. In that case we define the building of 
$\Centr_{\bG(k_0)}(\beta)$ in view of \cite{broussousStevens:09} using lattice functions and we work mainly with the rational points instead of the algebraic groups. 

The article is structured in the following way. After preliminary notation in \S2 and the model of the Bruhat--Tits building of $\bG$ over $k_0$ in terms of lattice functions is explained in \S3.  In \S4 we introduce the Moy--Prasad filtration for our purposes. We give the building of the centralizer in \S5 and introduce the notion of CLF in \S6 followed by the existence theorem in \S7. The uniqueness theorems in \S9, where no $\GL_m$ is involved, and \S11, for the general case, are prepared in the preceding sections. Lastly in \S12 we show that the constructed map respects apartments.

I thank very much P. Broussous, Prof. S. Stevens and Prof. E.-W. Zink for useful hints and fruitful communications. I want to express my gratitude to the German Research Foundation, who supported this work within the framework of BMS and SFB 878.

\section{Notation}\label{secNotation}
We are given a division algebra $D$ of finite index $d$ and central over a non-Archimedean local field $k$ of odd residue characteristic. The valuation on $k$ and its unique extension to $D$ are denoted by $\nu.$ We assume that the image of $\nu|_k$ is $\bbZ.$ Further let $\rho$ be an involution on $D,$ i.e. a skew field isomorphism from $D$ to $D^{op}$ of order one or two, in particular $\rho$ is an isometry. The fixed field of $\rho$ in $k$ is denoted by $k_0.$ 

\begin{remark}\cite[10.2.2]{scharlau:85}\label{remd12}
 The existence of $\rho$ implies $d\leq 2.$ If $k\neq k_0,$ then $d=1.$ 
\end{remark}

We fix an element 
$\epsilon\in\{1,-1\}$ and a non-degenerate $\epsilon$-hermitian form $h$ on an $m$-dimensional right $D$-vector space $V,$ i.e. a $\bbZ$-bi-linear map $h$ on $V\times V$ with values in $D$
 such that 
 \[h(v_1\lambda_1,v_2\lambda_2)=\epsilon\rho(\lambda_1)\rho(h(v_2,v_1))\lambda_2\]
 for all $\lambda_1,\lambda_2\in D$ and $v_1,v_2\in V.$ Further $\rho|_k$
 extends to the adjoint involution $\sigma$ of $h$ on 
 $\End_D(V).$
%
For a skew field $D'$ with discrete valuation the symbols $o_{D'}$ and $\mf{p}_{D'}$ denote the valuation ring and its maximal ideal respectively. We fix an algebraic closure $\bar{k}$
of $k.$ 

By a Bruhat--Tits building we alway understand the extended one \cite[4.2.16.]{bruhatTitsII:84}.
The set
\[G:=\U(\sigma):=\{f\in\End_D(V)|\ \sigma(f)f=\id_V\}\]
is the set of $k_0$-rational points of an algebraic group $\bU(\sigma)$ defined over $k_0.$
We also write $\bU(h)$ for $\bU(\sigma).$ We denote $\bU(h)$ by $\bG$ and $\Lie(\bG)(k_0)$ by $\mf{g}$ and identify the latter with the set $\Skew(\End_D(V),\sigma)$ of skew-symmetric elements of $\End_D(V)$ with respect to $\sigma.$
The Bruhat--Tits building of $\Building(\bG,k_0)$ is also denoted by $\Building(G).$
We repeat the strategy in \cite{broussousStevens:09} to describe the building as a set of self-dual lattice functions based on the description using norms, see \cite{bruhatTitsIV:87}.
\section{Norms and lattice functions}

The description of the Bruhat-Tits building in terms of norms and lattice functions needs some basic properties which are collected in this section. Proofs
and more details for norms can mainly be found in \cite{bruhatTitsIII:84}, \cite{bruhatTitsIV:87}. For lattice functions we refer to \cite{broussousStevens:09} and \cite{broussousLemaire:02}.

\begin{definition}{\cite[1.1]{bruhatTitsIII:84}}
A $D$-norm on $V$ is a map $\alpha:\ V\ra \bbR\cup\{\infty\}$ such that
\bi
\item $\alpha(v)=\infty$ if and only if $v=0,$
\item $\alpha(v\lambda)=\nu(\lambda)+\alpha(v),$
\item $\alpha(v+w)\geq\inf\{\alpha(v),\alpha(w)\}$,
\ei
for all $v,w\in V$ and $\lambda\in D.$ Fix a $D$-norm $\alpha$ on $V.$
The dual $\alpha^{\#}$ of $\alpha$ with respect to $h$ is defined to be the $D$-norm
\[v\in V\mapsto \inf\{\nu(h(v,w))-\alpha(w)|\ w\in V,\ w\neq 0\}.\] One calls $\alpha$ {\it self-dual} with respect to 
$h$ if it is equal to his dual. The set of $D$-norms and self-dual $D$-norms of $V$ are denoted by $\Normone{D}{V}$ and 
$\Normone{h}{V}$ respectively. 
Given a further right $D$-vector space $V'$ and a norm $\alpha'\in\Normone{D}{V'}$ the direct sum of $\alpha$ and $\alpha'$ is defined by 
\[(\alpha\oplus\alpha')(v+v'):=\inf\{\alpha(v),\alpha'(v')\},\ v\in V,\ v'\in V'.\]
\end{definition}

We consider a set $\Rcal$ of $D$-subspaces of $V$ whose direct sum is $V.$
We call $\Rcal$ a frame if all elements of $\Rcal$ are one dimensional.
An element $\alpha\in\Normone{D}{V}$ is {\it split} by $\Rcal$ if $\alpha$ is the direct sum of the 
$\alpha|_W,\ W\in\Rcal.$
If in addition $\Rcal$ is a frame and $(w_i)_i$ is a $D$-basis of $V$ consisting of elements of 
$\bigcup_{W\in\Rcal}W$ one calls  $(w_i)_i$ a splitting basis of $\alpha.$

\begin{remark}\cite[1.26]{bruhatTitsIII:84}
Every pair of $D$-norms on $V$ has a common splitting basis.
\end{remark}

Given two norms $\alpha,\ \gamma\in\Normone{D}{V}$ with common splitting basis $(v_i)$ and a real number
$\lambda\in [0,1]$ the convex combination of $\alpha$ with $\gamma$ with $\lambda$ is defined to be the 
$D$-norm on $V$ split by $(v_i)$ whose value at $v_i$ is 
\[\lambda\alpha(v_i)+(1-\lambda)\gamma(v_i).\]
This definition does not depend on the choice of the splitting basis and we get an affine structure on $\Normone{D}{V}.$
The $\Aut_D(V)$-action on $\Normone{D}{V},$ more precisely
\[(g.\alpha) (v):=\alpha(g^{-1}v),\]
restricts to a $\U(\sigma)$-action on $\Normone{h}{V}.$

The family of balls of a $D$-norm $\alpha$ leads to the idea of a lattice function:
\[\Lambda_{\alpha}(t):=\{v\in V|\ \alpha(v)\geq t\},\ t\in\bbR.\]
Before we give the definition we want to remark that by an  {\it $o_D$-lattice} in $V$ we mean a finitely generated $o_D$-submodule of $V$ which contains a $D$-basis of $V,$ i.e. we omit the word ''full''.

\begin{definition}{\cite[I.2.1]{broussousLemaire:02}}
A family $(\Lambda(r))_{r\in\bbR}$ of $o_D$-lattices in $V$ is said to be an $o_D$-lattice function in $V$ if 
\be
\item $\Lambda(r)\supseteq \Lambda(s),$
\item $\Lambda(r)=\bigcap_{t<r}\Lambda(t)$ and 
\item $\Lambda(r)\mathfrak{p}_D=\Lambda(r+\frac1d)$,
\ee  
for all $r,s\in\bbR$ with $r<s.$ The set of $o_D$-lattice functions in $V$ is denoted by $\Lattone{o_D}{V}.$
For the right limit of $\Lambda$ at $s$ we write $\Lambda(s+),$ i.e. 
\[\Lambda(s+):=\bigcup_{t>s}\Lambda(s).\] 
\end{definition}

For $t\in\bbR$ we define $\lceil t\rceil$ to be smallest integer not smaller than $t.$

\begin{proposition}{\cite[I.2.4]{broussousLemaire:02}}\label{propLattNorm1}
The map 
\[\alpha\mapsto\Lambda_{\alpha}\]
is a bijection from $\Normone{D}{V}$ to $\Lattone{o_D}{V}.$ Its inverse is given by 
\[\alpha_{\Lambda}(v):=\sup\{t\in\bbR |\ v\in\Lambda(t)\}.\] 
\end{proposition}

All notions for norms carry over to lattice functions in the following way.
The {\it dual}\/ of a lattice function $\Lambda$ is the lattice function which corresponds to $\alpha^{\#}_{\Lambda}.$ A lattice function is called {\it split}\/ by a given basis if the correspoding norm is split by this basis. 
The $\Aut_D(V)$-action on $\Normone{D}{V}$ defines an $\Aut_D(V)$-action on 
$\Lattone{o_D}{V}$ via push forward.

\begin{proposition}{\cite[3.3]{broussousStevens:09},\cite[I.2.4]{broussousLemaire:02}}\label{propLattNorm2}
Let $\Lambda$ be an $o_D$-lattice function in $V.$ Let $g$ be an element of $\Aut_D(V)$ and let $\Rcal$ be a set of $D$-subspaces of $V$ whose direct sum is $V.$
\be
\item The function $\Lambda$ is split by $\Rcal$ if and only if, for all real numbers $t$, the lattice $\Lambda(t)$ is the direct sum of the $o_D$-modules $W\cap\Lambda(t),$ $W\in\Rcal.$
\item If $(v_i)_i$ is a splitting basis of $\Lambda,$ then there are real numbers $a_i,\ 1\leq i\leq m,$ such that, for all $t$, 
we have
\[\Lambda(t)=\bigoplus_iv_i\mf{p}_D^{\lceil(t-a_i)d\rceil}.\]
We call $(a_i)_i$ the {\it coordinate tuple} of $\Lambda$ with respect to $(v_i).$ The map which assigns the coordinate tuple to a lattice function split by $(v_i)$ is an affine bijection onto $\bbR^m.$
\item We have $(g.\Lambda)(t)=g(\Lambda(t)).$
\item The dual $\Lambda^{\#}$ of $\Lambda$ with respect to $h$ is the $o_D$-lattice function whose value at $t\in\bbR$ is
\[\{v\in V|\ h(v,\Lambda((-t)+))\subseteq \mf{p}_D\}.\] 
\ee
\end{proposition}

\begin{proof}
All statements except for 4. can be found in part I of \cite{broussousLemaire:02}. Point 4. has been proven in 
\cite[3.3]{broussousStevens:09} for the case $D=k,$ but the proof is valid for the general case if one replaces $o_F$ by $o_D.$ 
\end{proof}

The set of self-dual $o_D$-lattice functions is denoted by $\Lattone{h}{V}.$ We recall that we have fixed an $\epsilon$-hermitian form $h$ on $V.$ 

\begin{theorem}{\cite[2.12]{bruhatTitsIV:87}}
There is a unique affine and $G$-equivariant bijection from 
$\Building(G)$ to $\Normone{h}{V}.$
\end{theorem}

Propositions \ref{propLattNorm1} and \ref{propLattNorm2} imply the existence of a unique affine and $G$-equivariant bijection from $\Building(G)$ to $\Lattone{h}{V}.$ It defines on $\Lattone{h}{V}$ a system of apartments which correspond to the
Witt-decompositions of $V.$
\begin{definition}
A set of one dimensional $h$-isotropic $D$-subspaces $W_l$ of $V,\ l\in L,$ is said to be a {\it Witt decomposition of $V$} if: 
\be
\item for every $l\in L$ there is exactly one index $l'\in L$ such that $h(W_l,W_{l'})$ is non-zero,
\item the sum of the $W_l$ is direct, and
\item the orthogonal complement $W$ of the sum of the $W_l$ is anisotropic. 
\ee
A lattice function is said to be {\it split} by a Witt decomposition $\{W_l|\ l\in L\}$ if it is split 
by $\{W_l|\ l\in L\}\cup\{W\}.$ 
A $D$-basis of $V$ is {\it adapted to} $\{W_l|\ l\in L\}$ if all basis elements lie in the union of $W$ and all $W_l.$ Further we assume $0\not\in L$ and denote
$W$ by $W_0.$
\end{definition}

Witt decompositions always exists, and even more, for every element of $\Lattone{h}{V}$ there is a splitting Witt decomposition. A proof for the latter in terms of norms can be found in \cite[2.13]{bruhatTitsIV:87}.
 
\begin{remark}{\cite[2.9]{bruhatTitsIV:87}}
\be
 \item The system of apartments of $\Lattone{h}{V}$ is the system of sets
\[\Lattone{h,\Scal}{V}:=\{\Lambda\in\Lattone{h}{V}|\ \Lambda\text{ is split by }\Scal\},\]
where $\Scal$ runs over the set of Witt decompositions of $V$ with respect to $h.$
\item If a self dual $D$-norm $\alpha$ split by a Witt decomposition $\{W_l|\ l\in L\}$ satisfies
\[\alpha(w_0)=\frac12\nu(h(w_0,w_0)),\]
for all $w_0\in W_0$ (see \cite[2.9]{bruhatTitsIV:87}).
\ee 
\end{remark}

\begin{proposition}\label{propNormSplitByOGBasis}
If $\alpha$ is a self dual $D$-norm on $V$ and $\{W_l|\ l\in L\}$ is a Witt decomposition splitting $\alpha,$
then every orthogonal $D$-basis of $W_0$ splits $\alpha|_{W_0}$.
\end{proposition}

\begin{proof}
Let $w'$ and $w''$ be elements of $W_0$ which are orthogonal to each other, then we have
\[\alpha\left(w'\right)=\alpha\left(\tfrac12\left(w'+w''\right)+\tfrac12\left(w'-w''\right)\right)\geq\inf\{\alpha\left(\tfrac12\left(w'+w''\right)\right),\alpha\left(\tfrac12\left(w'-w''\right)\right)\}=
\alpha\left(w'+w''\right),\]
and the last equality follows from
\[h\left(\tfrac12\left(w'+w''\right),\tfrac12\left(w'+w''\right)\right)=h\left(\tfrac12\left(w'-w''\right),\tfrac12\left(w'-w''\right)\right)\]
and $\nu(2)=0.$
\end{proof}

\section{The Moy--Prasad filtrations}
An $o_D$-lattice function $\Lambda$ gives rise to an $o_k$-lattice function in $A:=\End_D(V)$
\[\tilde{g}_{\Lambda}(t):=\{a\in A|\ a(\Lambda(s))\subseteq\Lambda(s+t),\ \forall s\in\bbR\}\]
called the {\it square lattice function} of $\Lambda$ in $A.$ It defines a Lie algebra filtration
of $\Lambda$ in $\mf{g}$ by intersection
\[\mf{g}_{\Lambda}(t):=\tilde{g}_{\Lambda}(t)\cap \mf{g}.\]

In \cite{moyPrasad:94} Moy--Prasad attached to every point $x$ of $\Building(G',k')$ a filtration 
$(\mf{a}_{x,t})_{t\in\mathbb{R}}$ of $\Lie(G')(k')$ for a reductive group $G'$ defined over a non-Archimedean local field $k'.$ 
The next theorem states that in our special situation of unitary groups we do not need the quite involved description.

\begin{theorem}{\cite{lemaire:09}}
The Lie algebra filtration of a point of $\Building(G)$ is exactly its Moy--Prasad filtration.
\end{theorem}

In this article the square lattice functions are the key for the rigidity of Lie algebra filtrations because of the 
following proposition.

\begin{proposition}{\cite[3.5]{broussousStevens:09}}
Let $\Lambda$ be an $o_D$-lattice function in $V.$
\be
\item The $o_k$-lattice function $(\sigma(\tilde{g}_{\Lambda}(t)))_{t\in\bbR}$ is the square lattice of $\Lambda^{\#}$ and we denote it by $\tilde{g}_{\Lambda}^{\sigma}.$ Square lattice functions fixed under $\sigma$ are said to be {\it 
self-dual.}
\item The map 
\[\Lambda\in \Lattone{h}{V}\mapsto\tilde{g}_{\Lambda}\]
is injective and onto to the set $\Latttwo{\sigma}{A}$ of self-dual square  lattice functions.
\ee
\end{proposition}

\begin{proof}
The proof in \cite[3.5]{broussousStevens:09} is valid if one replaces $F$ by $D.$
\end{proof}

\section{The centralizer}\label{secCentralizer}
In the following the symbol $\Centr_{*}(?)$ denotes the centralizer of ? in *.
Let us fix an element $\beta\in\Skew(\End_D(V),\sigma),$ such that the $k$-algebra $k[\beta]$ is a product of fields $E_i,$ $i\in I,$ with identity element $1_i.$ We call $\beta$ {\it separable} if $E_i|1_ik$ is separable for all $i.$ The centralizers $\Centr_{G}(\beta)$ and $\Centr_{\bG}(\beta)$  are denoted by $H$ and $\bH.$ 
If $\beta$ is separable then the algebraic group $\bH$ is reductive and defined over $k_0$ and its set of $k_0$-rational points is $H$ (see appendix \ref{appendixCentralizer}).

\begin{notation}\label{notActionOfSigmaOnI}
There is an action of $\sigma$ on $I$ via 
$\sigma(1_i)=:1_{\sigma(i)}.$ We denote by $I_0$ the fixed point set of the action of $\sigma$ on $I,$ and we divide the set $I\setminus I_0$ into two disjoint parts, i.e.
we choose a positive part $I_+$ and a negative part $I_-,$ such that 
\[\sigma(I_+)=I_-.\]
We define $-i:=\sigma(i)$ and put $V_i:=1_iV.$ For $i\in I_0$ we denote by $(E_i)_0$ the set of fixed points of $\sigma$ in $E_i,$ and for $i\in I_+$ we put 
$(E_i)_0:=E_i.$
\end{notation}

The group $H$ is the product of sets of rational points of classical groups over the $(E_i)_0.$ More precisely, for $i\in I,$ the $E_i$-algebra $\End_{\tens{E_i}{k}{D}}(V_i)$ is $E_i$-algebra isomorphic to 
$\End_{D'_i}(V'_i),$ for some finite-dimensional vector space $V'_i$ over some skew field $D'_i$
 central and of finite index over $E_i.$ We define
\[\bH_i:=\left\{\begin{array}{cc}
                \bU(\sigma|_{\End_{D'_i}(V'_i)}),& i\in I_0\\
                \bGL_{D'_i}(V'_i),& i\in I_+\\
               \end{array}\right.
\]
and $H_i:=\bH_i((E_i)_0).$
There is a canonical group isomorphism from $H$ to $\prod_{i\in I_+\cup I_0}H_i$ which motivates the following definition for the Bruhat--Tits building of $H:$
\[\Building(H):=\prod_{i\in I_+\cup I_0}\Building(\bH_i,(E_i)_0).\] 

\begin{remark}
If $\beta$ is separable the above isomorphism extends to a $k_0$-isomorphism 
\begin{eqnarray}
\bH \cong \prod_{i\in I_0\cup I_+}\Res_{(E_i)_0|k_0}(\bH_i)(k_0).
\end{eqnarray}
Thus there is a isomorphism of affine buildings from 
$\Building(\bH,k_0)$ to $\Building(H).$ 
\end{remark}

The Lie algebra of $H$ is isomorphic to 
\[\bigoplus_{i\in I_0}\Skew(\End_{\tens{E_i}{k}{D}}(V_i),\sigma)\oplus\bigoplus_{i\in I_+}\End_{\tens{E_i}{k}{D}}(V_i)\]
and we denote the $i$th factor by $\mf{g}_i.$ 
We define the Lie algebra filtration of $x=(x_i)_{i\in I_0\cup I_+}$ to be the direct sum of the Lie algebra filtrations of the $x_i,$ i.e.
\[\mf{h}_x(t):= \bigoplus_{i\in I_0\cup I_+}(\mf{g}_i)_{x_i}(t).\]
where we take the square lattice function for $i\in I_+$ and identify $\End_{\tens{E_i}{k}{D}}(V_i)$ with $\End_{D'_i}(V_i).$
\section{Compatibility with the Lie algebra filtrations}
\begin{definition}
\be
\item A {\it set with Lie algebra filtrations (LF-set)} is a pair $(X,L)$ such that $X$ is a set, $L$ is a Lie algebra, and for each $x\in X$, there is Lie algebra filtration $((L_x)(t))_{t\in\bbR},$ i.e. a decreasing sequence of subsets of $L.$
\item Given two LF-sets $(X,L)$ and $(X',L')$ and a map $\phi:L\ra L'$ 
\be
\item a map $f:X\ra X'$ is {\it compatible with the Lie algebra filtrations (CLF)} if 
\[\phi(L_x(t))=\im(\phi)\cap L'_{f(x)}(t)\]
holds, for all $x\in X$ and all real numbers $t$.
\item  $g:X'\ra X$ is {\it compatible with the Lie algebra filtrations (CLF)} if 
\[\phi(L_{g(x')}(t))=\im(\phi)\cap L'_{x'}(t)\]
holds, for all $x'\in X'$ and all real numbers $t$.
\ee
\ee
\end{definition}

In this article we consider buildings together with the set of rational points of a Lie algebra of an algebraic group.
Mostly $\phi$ is a canonical inclusion.
An example of a CLF-map is given in \cite{broussousLemaire:02}. The theorems in this section are valid without the assumption
on the residue characteristic and the existence of an involution on $D.$ Firstly, we introduce two important LF-sets after the following theorem.

\begin{definition}
\be
\item Two $o_D$-lattice functions $\Lambda$ and $\Lambda'$ on $V$ are {\it equivalent} if there is a real number $t$ such that 
\[\Lambda(s)=\Lambda'(s-t)=:(\Lambda'+t)(s),\]
for all $s\in\bbR.$ The set of equivalence classes of elements of $\Lattone{o_D}{V}$
is denoted by $\Latt{o_D}{V}.$ A map $f$ on $\Lattone{o_D}{V}$ of the form 
\[f(\Lambda)=\Lambda+t\]
is called a {\it translation}. For a map $g,$ such that $f\circ g$ exists, we define
\[g+t:=f\circ g.\]

\item An action of $\Aut_D(V)$ on $\Latt{o_D}{V}$ is given by 
\[a.[\Lambda]:=[a.\Lambda],\ \text{where }(a.\Lambda)(t):=a(\Lambda(t)).\] 
\item The affine structure on $\Lattone{o_D}{V}$ induces an affine structure
on $\Latt{o_D}{V}.$
\ee
\end{definition}

Let us consider the non-extended building $\building(\Aut_{D}(V))$ and the 
extended building $\Building(\Aut_{D}(V)).$

\begin{theorem}{\cite[I.2.4]{broussousLemaire:02},\cite[2.13]{bruhatTitsIII:84},\cite[2.11 (iii)]{bruhatTitsIII:84}}
The extended building of $\Aut_{D}(V)$ is in affine and $\Aut_D(V)$-equivariant bijection with the set $\Lattone{o_D}{V}.$
For two such $\Aut_D(V)$-equivariant affine bijections $f$ and $g$, there is a real number $t$, such that 
\[f=g+t,\] 
and the map $f$ induces via 
\[[x]\mapsto [f(x)]\]
a unique affine and $\Aut_{D}(V)$-equivariant
bijection from $\building(\Aut_{D}(V))$ to $\Latt{o_D}{V}.$
\end{theorem}

\begin{remark}
For $x\in\Building(\Aut_{D}(V))$, we attach to $x$ and to $[x]$ the square lattice function of a correspoding $o_D$-lattice function. By the above theorem 
both LF-sets $(\Building(\Aut_{D}(V)),\End_D(V))$ and $(\building(\Aut_{D}(V)),\End_D(V))$ are well-defined, i.e. do not depend on the 
choices made. In $(\building(\Aut_{D}(V)),\End_D(V)),$ a point is uniquely determined by its Lie algebra filtration (see \cite[I.4.5]{broussousLemaire:02}), whereas in $(\Building(\Aut_{D}(V)),\End_D(V))$ two points with the same Lie algebra filtration are translates of each other.
\end{remark}

\begin{theorem}{\cite[II.1.1]{broussousLemaire:02}}\label{thmBL02II.1.1.}
Let $E|k$ be a field extension in $\End_D(V),$ $\ti{G}:=\Aut_{D}(V)$ and $\ti{H}:=\Centr_{\ti{G}}(E).$ 
We consider the non-extended buildings $\building(\ti{G})$ and $\building(\ti{H})$ as LF-sets as constructed in the above remark. Then there is a unique CLF-map $j_E$ from
$\building(\ti{G})^{E^{\times}}$ to $\building(\ti{H}).$ The map has the following properties.
\be
\item It is bijective.
\item It is $\ti{H}$-equivariant.
\item It is affine.
\ee
The map $j^E:=(j_E)^{-1}$ is the unique map satisfying 2. and 3..
\end{theorem}

In the above setting the CLF-property of $j^E$ implies uniqueness.

\begin{theorem}\cite[10.3]{broussousStevens:09}\label{thmBS10.3}
Under the assumptions of Theorem \ref{thmBL02II.1.1.}, suppose that the points $y\in\building(\ti{G})$ and $x\in\building(\ti{H})$ satisfy
\[\ti{\mf{g}}_{y}\cap \End_{\tens{E}{k}{D}}(V)\supseteq\ti{\mf{h}}_x.\]
Then $j^E(x)=y.$
\end{theorem}

\begin{proof}
In \cite[10.3]{broussousStevens:09} this theorem was proven for the case where
$D=k$ and $E$ is generated by one element. The proof did not use the second assumption, and it carries over to $D\neq k$ without changes. 
\end{proof}

The map $j^E$ is induced from a map between the extended buildings. We fix an  $E$-algebra isomorphism from $\End_{\tens{E}{k}{D}}(V)$ to $\End_{D'}(V')$ where
$D'$ is a central skew field over $E$ and $V'$ is a right $D'$-vector space.

\begin{theorem}{\cite[II.3.1,II.4]{broussousLemaire:02},\cite[2.11 (iii)]{bruhatTitsIII:84}}\label{thmBL02II.3.1II.4}
There is a bijective affine $\ti{H}$-equivariant CLF-map 
\[\ti{j}^E:\ \Lattone{o_{D'}}{V'}\ra\Lattone{o_D}{V}^{E^{\times}}\]
such that 
\[[\ti{j}^E(\Lambda)]=j^E([\Lambda])\]
for all $\Lambda\in\Lattone{o_{D'}}{V'}.$
The image is the set of $o_D$-lattice functions which are in addition $o_E$-lattice functions. For every other bijective affine $\ti{H}$-equivariant map $j$ from $\Lattone{o_{D'}}{V'}$ to $\Lattone{o_D}{V}^{E^{\times}}$ the composition
$j^{-1}\circ\ti{j}^E$ is a translation of  $\Lattone{o_{D'}}{V'}.$
\end{theorem}

\begin{proof}
The existence of $\ti{j}^E$ is stated in Lemma \cite[II.3.1]{broussousLemaire:02}. 
The affineness is proven in \cite[II.4]{broussousLemaire:02}
for $j_E,$ but the proof actually shows that $\ti{j}^E$ is affine. The $\ti{H}$-equivariance 
follows from the formula given in \cite[II.3.1]{broussousLemaire:02}. 
The fact that the image of $j^E$ is the set of $E^{\times}$-fixed points implies that $\im(\ti{j}^E)$ only consists of $o_E$-lattice functions and contains a representative for every element of $\im(j^E).$ 
Being $E^{\times}$-equivariant and convex, the image of $\ti{j}^E$ must contain every $o_E$-lattice function of $\Lattone{o_D}{V}.$ The last assertion follows directly from
\cite[2.11 (iii)]{bruhatTitsIII:84}.
\end{proof}
\section{CLF-map from $\Building(H)$ to $\Building(G)$}\label{secCLFExistence}

We are now returning to the situation of section \ref{secCentralizer}.
Before we state the first theorem we give a definition in analogy with the set of fixed points in the building.

\begin{definition}
An \textit{$o_E$-$o_D$-lattice function} of $V$ is an $o_D$-lattice function in $V$ 
which splits under $(V_i)$ such that, for every $i$, the function
\[t\mapsto \Lambda(t)\cap V_i\]
is an $o_{E_i}$-lattice function in $V_i.$ We denote the set of $o_E$-$o_D$-lattice functions
by $\Lattone{o_E,o_D}{V}.$
\end{definition}

The next theorem has been proven for $D=k$ in \cite{broussousStevens:09}.

\begin{theorem}\label{thmExistence}
There is an injective, affine and $H$-equivariant CLF-map
\[j:\ \Building(H)\ra\Building(G)\]
whose image in terms of lattice functions is the set of self-dual $o_E$-$o_D$-lattice functions in $V.$
\end{theorem}

\begin{proof}
The product decomposition $H=\prod_{i\in I_+\cup I_0}\Centr_{\U(h_i|_{(V_i+V_{-i})^2})}(\beta_i+\beta_{-i})$ leads us to three steps:
\be
\item the case where $E$ is a field;
\item the case where $I_+$ has cardinality one;
\item the general case. 
\ee

Step 1: We use the map $j^E$ from Theorem \ref{thmBL02II.1.1.} and the following diagram.
\[\begin{array}{ccc}
\Latt{o_{D'}}{V'}& \stackrel{j^E}{\ra} &\Latt{o_D}{V}\\
\uparrow & & \uparrow\\
\Building(H)& & \Building(G)\\
\end{array}\]
We have to prove that, for $x\in\Building(H)$, the square lattice function of $j^E(x)$ is self-dual. The latter follows from Theorem \ref{thmBS10.3}
because the square lattice function of $x$ is self-dual and $j^E$ is a CLF-map. The assertion about the image 
of $j^E|_{\Building(H)}$ follows from \ref{thmBL02II.1.1.} and the CLF-property.

Step 2: To prove the lemma we denote the unique element in $I_+$ by $i$ and we define
\[\Lambda^{\#_{-i}}(t):=\{v\in V_{-i}|\ h(v,\Lambda((-t)+))\subseteq \mf{p}_D\}\]
for $\Lambda\in\Lattone{o_D}{V_i}.$ 
The map
\beq\label{eqEmbedGroup}
g\in\Aut_D(V_i)\mapsto (g,0)+\sigma((g^{-1},0))\in\Aut_D(V_i)\oplus\Aut_D(V_{-i})
\eeq defines
a $k$-embedding from $\bGL_{D}(V_i)$ to $\bG$ mapping $\Centr_{\GL_{D}(V_i)}(\beta_i)$ onto 
$H.$ An injective, affine and $\Aut_D(V_i)$-equivariant CLF-map from $\Building(\bGL_{D}(V_i),k)$ to 
$\Building(G)$ in terms of lattice functions is given by 
\beq\label{eqEmbedBuild}
\Lambda\in\Lattone{o_D}{V_i}\mapsto (\Lambda\oplus\Lambda^{\#_{-i}})\in\Lattone{h}{V}.
\eeq
Now we apply Theorem \ref{thmBL02II.3.1II.4} to finish step 2.

Step 3: We take the direct sum of the maps constructed in the steps before. \end{proof}
\section{Factorization}
In order to analyze the set of CLF-maps from $\Building(H)$ to $\Building(G)$ this section reduces the problem to the case
Where $I_0\cup I_+$ has cardinality one. 
\begin{lemma}\label{lem1betai0}
There is at most one index $i\in I$ such that $\beta_i=0$
and if such an index exists, it has to be in $I_0.$
\end{lemma} 

\begin{proof}
Assume that $\beta_i$ and $\beta_j$ are zero for two different indices $i$ and $j.$ Taking a polynomial $P$ with coefficients in $k$ 
such that $1_i=P(\beta)$ we obtain firstly
\[1_i=1_i1_i=1_iP(\beta)=1_iP(0)\]
and secondly
\[0=1_i1_j=P(\beta)1_j=P(0)1_j\] which is a contradiction. 
The second assertion follows from $-\beta_{-i}=\sigma(\beta_i)$ and the uniqueness.
\end{proof}

Let us recall that the embedding of $\mf{h}$ into $\mf{g}$ is realized by mapping an element $(a_i)_{i\in I_0\cup I_+}$
of \\ $(\prod_{i\in I_0}\Skew(\End_{\tens{E_i}{k}{D}}(V_i),\sigma))\times (\prod_{i\in I_+}\End_{\tens{E_i}{k}{D}}(V_i))$
to $(\sum_{i\in I_0} a_i) + (\sum_{i\in I_+}(a_i-\sigma(a_i))).$

\begin{assumption}\label{assIntersectionProperty}
For the rest of the section we fix elements $y\in\Building(G)$ and $x\in\Building(H)$ such that 
\[\mf{g}_y\cap \mf{h}=\mf{h}_x.\] The element $x$ is given by a tuple $(x_i)_{i\in I_0\cup I_+}.$
The set $\Lie(\bH_i)((E_i)_0)$ is denoted by $\mf{h}_i.$ For $i\in I_0$, we write 
$\mf{h}_{x_i}$ and $\tilde{\mf{h}}_{x_i}$ for the Lie algebra filtration and the square lattice function of $x_i,$ respectively.
\end{assumption}

\begin{lemma}\label{lemLambdaysplitsUnderVi}
 The lattice function corresponding to $y$ splits under $\{V_i|\ i\in I\}.$
\end{lemma}

\begin{proof}
The assertion is equivalent to the fact that all idempotents $1_i$ are elements of $\ti{\mf{g}}_y(0).$

Case 1: We first consider an index $i\in I_{+}.$ Then $1_i$ is an element of $\mf{h}_{x_i}(0)$ and thus $1_i-1_{-i}$ is an element of
$\mf{g}_y(0)$ by \ref{assIntersectionProperty}. Therefore 
\[1_i+1_{-i}=(1_i-1_{-i})^2\in\ti{\mf{g}}_y(0).\]
Hence $1_i$ and $1_{-i}$ are elements of $\ti{\mf{g}}_y(0),$ since $2$ is
invertible in $o_k.$ 

Case 2: We take an index $i\in I_0$ and we assume that 
$\beta_i$ is not zero. Since $\beta_i$ is skew-symmetric and central, we have, for all
$t\in \bbR$,
\[\beta_i\ti{\mf{h}}_{x_i}(t)=\ti{\mf{h}}_{x_i}(t+\nu(\beta_i))\]
and  
\[\beta_i\mf{h}_{x_i}(t)=\ti{\mf{h}}_{x_i}(t+\nu(\beta_i))\cap
\{w\in\ti{\mf{h}}_i\mid\ \sigma|_{\ti{\mf{h}}_i}(w)=w\}.\]
By the invertibility of $2$ in $o_k$, every element of $\ti{\mf{h}}(t)$ is a sum of a skew-symmetric and a symmetric
element of $\ti{\mf{h}}_{x_i}(t),$ which implies 
\begin{align*}\label{eqnLFIncl}
\ti{\mf{h}}_{x_i}(0)&= \mf{h}_{x_i}(0)+\beta_i\mf{h}_{x_i}(-\nu(\beta_i))\\
&\subseteq \mf{g}_y(0)+ \mf{g}_y(\nu(\beta_i))\mf{g}_y(-\nu(\beta_i))\\
&\subseteq \ti{\mf{g}}_y(0).
\end{align*}
Thus the $i$th idempotent $1_i$ is an element of $\ti{\mf{g}}_y(0).$

Case 3: If there is an index $i_0$ such that $\beta_{i_0}=0,$ 
it is unique by Lemma \ref{lem1betai0}  and the two cases above imply
\[1_{i_0}=1-\sum_{i\neq i_0}1_i\in \ti{\mf{g}}_y(0).\]
\end{proof}
\vspace{1em}

The idea of the proof of case 2 is taken from 
\cite[11.2]{broussousStevens:09}. We define $\bG_i:=\bU(h|_{(V_i+V_{-i})^2}),$ for $i\in I_0\cup I_+.$

\begin{corollary}\label{corPsiI}
For non-negative indices there are elements $y_i\in \Building(\bG_i,k_0),$ 
such that the direct sum of the lattice functions of the $y_i$ is the lattice function of $y.$ 
\end{corollary}

Analogous to the definitions for $G$ we use $\mf{g}_i$ and $\mf{g}_{y_i}$ for  $\Lie(\bG_i)(k_0)$ and the Lie algebra filtration of $y_i,$ respectively.

\begin{lemma}\label{lemIntersectionPropertyComp}
For all $i\in I_0\cup I_+$ we have 
\[\mf{h}_{x_i}=\mf{h}_i\cap\mf{g}_{y_i}.\]
\end{lemma}

\begin{proof}
For $t\in\bbR$ we have
\begin{align*}
\bigoplus_i\mf{h}_{x_i}(t)&= \mf{h}(t)\\
&= \mf{g}_y(t)\cap \mf{h}\\
&= (\mf{g}_y(t)\cap (\bigoplus_i\mf{g}_i))\cap \mf{h}\\
&= (\bigoplus_i\mf{g}_{y_i}(t))\cap (\bigoplus_i\mf{h}_i)\\
&= \bigoplus_i(\mf{g}_{y_i}(t)\cap\mf{h}_i),
\end{align*}
where $i$ runs over $I_0\cup I_+,$ and we obtain the assertion.
\end{proof}
\vspace{1em}

The last two lemmas lead to a factorization of any CLF-map.
More precisely we prove the following proposition.

\begin{proposition}\label{propFactOfCLFMap}
Let $\psi_I$ denote the canonical map from $\Building(\prod_i\bG_i,k_0)$ to $\Building(\bG,k_0)$ which  maps a tuple self-dual lattice functions to its sum. Every CLF-map $j$ from $\Building(H)$ to 
$\Building(G)$ factors under $\psi_I,$ i.e. there is a unique map
\[\tau:\ \Building(H)\ra\Building(\prod_i\bG_i,k_0)\text{ such that }j=\psi_I\circ\tau.\]
The map $\tau$ is
\be
\item a CLF-map,
\item affine if $j$ is affine, and
\item $H$-equivariant if $j$ is $H$-equivariant.
\ee
\end{proposition}

\begin{proof}
The image of $j$ is contained in the image of the injective, affine and $\prod_{i\in
I_0\cup I_+}\bG_i(k_0)$-equivariant map $\psi_I$ by Corollary \ref{corPsiI}. The CLF-property of $\tau$ is a consequence of 
Lemma \ref{lemIntersectionPropertyComp}. This proves the proposition. 
\end{proof}
\section{The case where $I_+$ is empty}

We denote the following algebraic group 
\[\left\{\left(\begin{array}{cc}\lambda&0\\ 0&\lambda^{-1}\end{array}\right),\left(\begin{array}{cc}0&\lambda\\ \lambda^{-1}&0\end{array}\right)|\ \lambda\in \bar{k}^{\times}\right\}\]
by $\bO^{is}_{2}.$ It is a form of $\bO_2$ over the prime field of $k.$ 

\begin{remark}\label{remO2is}
\be 
\item The group $\bU(h)$ is $k_0$-isomorphic to $\bO^{is}_{2}$ if and only if  $D=k=k_0,$ $V$ is two-dimensional over $k$ and isotropic with respect to $h$ and $\sigma$-orthogonal. 
\item The connected component of $\bO^{is}_{2}$ is $k$-isomorphic to $\bG_m,$ implying that $\Building(\bO^{is}_{2},k)$ is affinely isomorphic to $\bbR.$
\item All points of $\Building(\bO^{is}_{2},k)$  have the same Lie algebra filtration. Especially all of its affine endomorphisms are CLF-maps.
\ee 
\end{remark}

The remark forces us to exclude $\bO^{is}_{2}$ from the factors. 

\begin{theorem}\label{thmUniquenessI+empty}
There exists only one CLF-map from $\Building(H)$ to $\Building(G)$ if $I_+$ is empty and no group $\bH_i$ is $(E_i)_0$-isomorphic to $\bO^{is}_{2}.$  
\end{theorem}

For the proof we need the following operations on square matrices.

\begin{definition}\label{defTildeOpSqMatr}
For a square matrix $B=(b_{i,j})\in M_r(D)$ the matrix $\tilde{B}$ is defined to
be $(b_{r+1-j,r+1-i})_{i,j},$ i.e. $\tilde{B}$ is obtained from $B$ by a
reflection along the antidiagonal. We define further
\[B^{\rho}:=(\rho(b_{i,j}))_{i,j}.\]
\end{definition}

\begin{proof}
Applying Lemma \ref{lemIntersectionPropertyComp} we can assume that $E$ is a field. 

Case 1: $\beta$ is not zero. Compare with \cite[11.2]{broussousStevens:09}.
We fix an arbitrary CLF-map $j$ from $\Building(H)$ to $\Building(G)$ and an arbitrary element $x$ of $\Building(H).$
By the same argument as in case 2 of Lemma \ref{lemLambdaysplitsUnderVi} we obtain 
\[\ti{\mf{h}}_x(t)\subseteq \ti{\mf{g}}_{j(x)}(t)\]
for all real numbers $t.$ Theorem \ref{thmBS10.3} implies the uniqueness.  

Case 2: $\beta$ is zero.
If $\sigma$ is of the second type there is a skew-symmetric non-zero element
$\beta'$ in $k$ and we can replace $\beta$ by $\beta'$ and apply case 1. 
Thus we only need to consider the case where $\sigma$ is of the first kind.
We fix a point $y\in\Building(G)$ and fix an apartment 
containing $y.$ This apartment is determined by a Witt decomposition. We choose an adapted basis $(w_i)_{1\leq j\leq m}$ such that the Gram matrix $\Gram_{(w_i)}(h)$ of the $\epsilon$-hermitian form $h$ has the
form 
\begin{displaymath}
\left(\begin{array}{ccc}
0 & M & 0\\
\epsilon M & 0 & 0\\
0 & 0 & N\\
\end{array}\right)
\end{displaymath}
with $M:=\antidiag(1,\ldots,1)$ and a diagonal regular matrix $N.$
From \ref{propNormSplitByOGBasis} we deduce that the self-dual $o_D$-lattice function $\Lambda$
corresponding to $y$ is split by this basis. It is thus described by its
intersections with the lines $w_iD,$
i.e. there are real numbers $a_i$ such that 
\[\Lambda(t)=\bigoplus_iw_i\mf{p}_{D}^{\lceil(t-a_i)d\rceil}.\]
Thus the square lattice function of $y$ in $t$ is 
\[\ti{\mf{g}}_{y}(t)=\bigoplus_{i,j}\mf{p}_{D}^{\lceil(t+a_j-a_i)d\rceil}
E_{i,j}\] 
where $E_{i,j}$ denotes the matrix with a 1 in the intersection of the $i$th row
and the $j$th column and zeros everywhere else. See for example
\cite[I.4.5]{broussousLemaire:02}.

It is enough to show that $\ti{\mf{g}}_y$ is determined by the Lie algebra
filtration $\mf{g}_y.$ Indeed, a class of $o_D$-lattice functions contains at most one self-dual lattice function. Thus the self-dual square lattice function of a point of $\Building(G)$ determines the point uniquely.

The adjoint involution of $h$  
\[B\mapsto B^{\sigma}=\Gram_{(w_i)}(h)^{-1}(B^\rho)^T \Gram_{(w_i)}(h)\] on
$\Matr_m(D)$ has under $(w_i)_{1\leq j\leq m}$ the form 
\begin{displaymath}
\left(\begin{array}{ccc}
B_{1,1} & B_{1,2} & B_{1,3}\\
B_{2,1} & B_{2,2} & B_{2,3}\\
B_{3,1} & B_{3,2} & B_{3,3}\\
\end{array}\right)\mapsto 
\left(\begin{array}{ccc}
\tilde{C}_{2,2} & \epsilon \tilde{C}_{1,2} & \epsilon M C_{3,2}^TN\\
\epsilon \tilde{C}_{2,1} & \tilde{C}_{1,1} & M C_{3,1}^T N\\
\epsilon N^{-1} C_{2,3}^TM & N^{-1} C_{1,3}^T M & N^{-1} C_{3,3}^T N\\
\end{array}\right).
\end{displaymath}
The matrices $B_{1,1},B_{1,2},B_{2,1}$ and $B_{2,2}$ are $r\times r$-matrices
and $C:=B^\rho$ where $r$ is the Witt index of $h.$
By the above calculation we obtain that 
$E_{i,j}^{\sigma}$ is $+E_{i,j},$ $-E_{i,j}$ or $\lambda E_{u,l}$ with
$(i,j)\neq (u,l)$ for some $\lambda\in D^\times.$ From the self-duality of 
$\End(\Lambda)$ and since 2 is invertible in $o_k$ we get:
\[\mf{g}_y(t)\cap k(E_{i,j}-E_{i,j}^{\sigma})
=\mf{p}_k^{\lceil t+a_j-a_i\rceil}(E_{i,j}-E_{i,j}^{\sigma}).\]
For the calculation see Lemma \ref{lemIntersectionGaussBracket} below. Thus we can obtain the ex\-po\-nent
$a_j-a_i$ from the know\-ledge of the Lie algebra fil\-tration if
$E_{i,j}$ is not fixed by $\sigma.$ We now con\-si\-der two ca\-ses.

Case 2.1: We assume that there is an anisotropic part in the Witt
de\-com\-po\-si\-tion, i.e. $N$ occurs. The matrix $E_{i,m}$ is fixed by
$\sigma$ if and only if $i$ equals $m.$
Thus from the know\-ledge of the Lie al\-ge\-bra fil\-tra\-tion we know all
dif\-fe\-ren\-ces $a_i-a_{m}$ for all in\-de\-xes $i$ dif\-fe\-rent
from $m,$ and thus by sub\-trac\-tions we know the dif\-fe\-ren\-ces
$a_i-a_j$ for all $i$ and $j.$

Case 2.2: Now we assume that there is no an\-iso\-tro\-pic part in the Witt
de\-com\-po\-si\-tion. If $\epsilon$ is $-1,$ no $E_{i,j}$ is fixed and we can
obtain the differences $a_i-a_j$ for all $i$ and $j.$ As a
consequence, we only have to consider the case where $h$ is hermitian and $D=k$ 
(see \cite[1.14]{bruhatTitsIV:87}).
Here the matrix $E_{i,j}$ is fixed by $\sigma$ if and only if $i+j=m+1.$ Thus we
can determine all differences $a_i-a_j$ where $i+j\neq m+1.$ If $m$ is
at least $4$ for an index $i$ there is an index $k\neq i$ with $i+k\neq m+1$ and
we can obtain $a_i-a_{m+1-i}$ if we add
$a_k-a_{m+1-i}$ to $a_i-a_k.$
If $m$ equals $2$, then the group
$\bG$ is 
$k$-isomorphic to $\bO^{is}_{2}$ which is excluded by the assumption of the
theorem. 
\end{proof}

The idea of taking the root system of $\bG$ over $k$ for the non-unitary step of the last proof was given by P. Broussous. To complete the proof we need the following lemma.

\begin{lemma}\label{lemIntersectionGaussBracket}
For all $t\in\bbR$ we have
\[\mf{p}_D^{\lceil td\rceil}\cap k=\mf{p}_k^{\lceil t\rceil}.\]
\end{lemma}

\begin{proof}
For an element $x$ of $k$, we have $x\in\mf{p}_D^{\lceil td\rceil}$ if and only if $\nu(x)\geq\frac{\lceil td\rceil}{d}$.
There are integers $l$ and $s$ such that $1\leq s\leq d$ and
\[{\lceil td\rceil}=ld+s\]
Thus ${\lceil t\rceil}= l+1$ and we get that $\nu(x)\geq\frac{\lceil td\rceil}{d}$ if and only if $\nu(x)\geq {\lceil t\rceil}$.
The  ''only if'' follows from $\nu(x)\in\bbZ.$
\end{proof}

\begin{remark}
\be
\item In particular the proof of Theorem \ref{thmUniquenessI+empty} shows that, if $\bH_i$ is $(E_i)_0$ isomorphic to $\bO^{is}_{2},$ then $\beta_i$ has to be zero and $\sigma$ is of the first kind.
\item For positive indices $\bH_i$ is not isomorphic to $\bO_2$ because the latter is not connected.
\item Let us assume that $\beta$ is separable. The above remarks and \ref{remO2is} imply, for $i\in I_0\cup I_+$, that $\bO_{2}^{is}$ is $k_0$-isomorphic to $\Res_{(E_i)_0| k_0}(\bH_i)$ if and only if $\bH_i$ is $(E_i)_0$-isomorphic to $\bO_{2}^{is}.$  
\ee
\end{remark}

\section{Rigidity of Euclidean buildings}

To have an approach to a uniqueness statement if there are no restrictions on $I$ we show that Euclidean buildings are rigid for functionals.

\begin{definition}
\be
\item A {\it set with affine structure} is a pair $(S,*)$ consisting of a non-empty set 
$S$ and a map \[*:[0,1]\times S\times S\ra S,\] which we denote
\[ts_1+(1-t)s_2:=*(t,s_1,s_2).\]
\item  An \textit{affine functional} $f$ on a set 
$S$ with affine structure is an affine map from $S$ to $\bbR,$ i.e.
\[f(tx+(1-t)y)=tf(x)+(1-t)f(y),\]
for all $t\in [0,1]$ and $x,y\in S.$
\ee
\end{definition}

\begin{proposition}\label{propRigidityForAffineBuildings}
Let $\Omega$ be a thick Euclidean building and $|\Omega|$ be its geometric
realization. Then every affine functional $a$ on $|\Omega|$ is constant.
\end{proposition}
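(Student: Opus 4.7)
My plan is to show that $a$ is constant on each apartment and then invoke connectedness: two apartments can be joined by a chain sharing chambers, and on each such intersection $a$ agrees, so the apartment-wise constants are equal. Fix an apartment $A$, an affine space of dimension $n$. The restriction $a|_A$ is affine, determined by its gradient $\nabla \in (TA)^*$. Since the walls of $A$ are affine hyperplanes whose normal directions span $TA$ (the roots of the affine Coxeter system span the underlying vector space), it suffices to show that the normal component of $\nabla$ vanishes for every wall $H$ of $A$.

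The central ingredient is a three-apartment argument at $H$, which is where thickness enters. Let $D_1, D_2$ be the half-apartments of $A$ bounded by $H$. By thickness, a panel of $H$ carries a chamber $C_3$ outside $A$, and the building axioms produce apartments $A_{13} = D_1 \cup D_3$ and $A_{23} = D_2 \cup D_3$ with a common half-apartment $D_3$ bounded by $H$. Introduce coordinates $(x_1,\ldots,x_n)$ on $A$ with $H = \{x_n = 0\}$, $D_1 = \{x_n \ge 0\}$, so $a|_A = c_1 x_1 + \cdots + c_n x_n + d$. Extend them to $A_{13}$ by an apartment isomorphism fixing $D_1$, so that $a|_{A_{13}}$ has the same formula and $D_3 = \{x_n \le 0\}$. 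Independently choose coordinates $(y_1,\ldots,y_n)$ on $A_{23}$ by an apartment isomorphism fixing $D_2$: then $D_2 = \{y_n \le 0\}$, $D_3 = \{y_n \ge 0\}$, and $a|_{A_{23}} = c_1 y_1 + \cdots + c_n y_n + d$. The inward normal of $D_3$ at $H$ is an intrinsic direction in the tangent cone of $|\Omega|$ and equals $-\partial_{x_n}$ in the first system but $+\partial_{y_n}$ in the second, while the coordinates on $H$ agree (both extend those on $A$ via isomorphisms that fix the corresponding half-apartment, hence its boundary $H$); these two facts force $y_n = -x_n$ and $y_i = x_i$ for $i<n$ on $D_3$. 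Equating the two formulas for $a$ on $D_3$ gives
\[
c_1 x_1 + \cdots + c_n x_n + d \;=\; c_1 x_1 + \cdots - c_n x_n + d,
\]
valid for all $x_n \le 0$, so $c_n = 0$.

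The main obstacle I anticipate is the geometric setup rather than the algebraic step: one must verify that thickness and the building axioms really furnish apartments $A_{13}$ and $A_{23}$ with the stated pairwise intersections in $D_1, D_2, D_3$, and that the sign-flip $y_n = -x_n$ on $D_3$ is dictated by the intrinsic building structure (the common inward normal to $D_3$) rather than by a cosmetic choice of apartment isomorphism. Once these are in place, the conclusion $\nabla = 0$ follows by applying the argument to every wall of $A$, and the apartment-wise constancy of $a$ propagates to a global constant by connectedness of $|\Omega|$.
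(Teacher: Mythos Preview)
Your argument is correct and rests on the same core idea as the paper's: exploit thickness at a panel to produce three mutually adjacent half-objects, and observe that an affine functional cannot be consistent on all three unless its transverse variation vanishes. The execution differs, however. The paper works combinatorially: it picks the three vertices $P_1,P_2,P_3$ opposite a common panel $S$, notes that each segment $[P_i,P_j]$ passes through the same point $Q\in S$ (the midpoint under the wall reflection), and derives a convexity contradiction unless $a(P_1)=a(P_2)=a(P_3)$; this forces $a$ to be constant on each type of vertex, and since vertices of a fixed type affinely span any apartment, $a$ is constant. Your version is analytic: you compare coordinate charts on the three apartments $A$, $A_{13}$, $A_{23}$ and read off $c_n=0$ from the forced sign flip $y_n=-x_n$ on $D_3$.

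What each buys: the paper's convexity argument is shorter and avoids the bookkeeping of apartment isomorphisms, but it tacitly uses the geometric fact that the three segments share a common midpoint. Your approach is more explicit and makes the role of the wall reflection transparent, at the cost of having to justify that $A_{13}=D_1\cup D_3$ and $A_{23}=D_2\cup D_3$ really are apartments sharing the \emph{same} half $D_3$; you correctly flag this as the only nontrivial step, and it is indeed a standard consequence of the building axioms (any chamber adjacent to a panel in $A$ lies in an apartment containing the half of $A$ on the far side of the wall).
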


For the definition of a thick Euclidean building and its geometric realization
see \cite[VI.3]{brown:89}.

\begin{proof}Let $C_1,\ C_2$ and $C_3$ be three pairwise different adjacent chambers having a common co-dimension 1 face $S.$ We denote by $P_i$ the unique vertex of $C_i$ which is not a vertex of $S.$ 
The line segment $[P_1,P_2]$ meets $[P_1,P_3]$ and $[P_2,P_3]$ in a point $Q\in
|\bar{S}|.$ This can be seen as follows.  
We are working in three different apartments simultaneously. If $\Delta_{ij}$
denotes an apartment 
containing $C_i$ and $C_j,$  for different $i$ and $j,$ the affine isomorphism 
from $|\Delta_{12}|$ to $|\Delta_{13}|$ fixing $|\Delta_{12}\cap\Delta_{13}|$ sends
$[P_1,P_2]$ to $[P_1,P_3]$ and 
thus the unique intersection point in $[P_1,P_2]\cap |\bar{C_1}|\cap |\bar{C_2}|$
lies on $[P_1,P_3],$ and similarly on $[P_3,P_2].$  
Without loss of generality assume that $a(Q)$ vanishes. If $a(P_1)$ is negative
then $a(P_2)$ and $a(P_3)$ are positive by the affineness of $a.$ Thus 
$a(Q)$ is positive since it lies on $[P_2,P_3].$ A contradiction. Using
galleries we obtain that $a$ is constant on vertices of the same type. An
apartment is affinely generated by its vertices of a fixed type. Thus $a$ is
constant on every apartment and therefore on $|\Omega|,$ since any two apartments are connected by a gallery. 
\end{proof}

\begin{proposition}\label{propRigidityForClassicalGroups}
If $\bG$ is not $k_0$-isomorphic to $\bO_{2}^{is}$, then every affine functional on $\Building(G)$ is constant.
\end{proposition}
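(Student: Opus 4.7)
The plan is to deduce Proposition~\ref{propRigidityForClassicalGroups} from Proposition~\ref{propRigidityForAffineBuildings} by checking that, under the assumption $\mathbf{G}\not\cong O_2$, the building $\mathcal{I}$ may be regarded as the geometric realization of a thick affine building in the sense of \cite[VI.3]{brown:89}.

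First I would dispose of the trivial case in which the $F$-rank of $\mathbf{G}^\circ$ is zero: then $\mathcal{I}$ is a single point and every affine functional is tautologically constant, so assume positive $F$-rank. In general the enlarged building decomposes as a direct product of the reduced Bruhat-Tits building with a real vector space whose dimension equals the $F$-rank of the maximal $F$-split central torus of $\mathbf{G}^\circ$. A case-by-case inspection over the classical types appearing here (unitary, symplectic, orthogonal) shows that this split central torus is trivial, except when $\mathbf{G}$ is isomorphic to $O_2$ over $F$; in that excluded case $SO_2$ is the one-dimensional split torus $\mathbb{G}_m$ and contributes an $\mathbb{R}$-summand, on which non-constant affine functionals obviously exist. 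Under our hypothesis therefore $\mathcal{I}$ coincides with the reduced Bruhat-Tits building of a semisimple $F$-group of positive $F$-rank.

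To finish I would invoke Proposition~\ref{propRigidityForAffineBuildings}: the reduced Bruhat-Tits building of such a group over the non-archimedean local field $F$ is a thick affine building, since the residue field has at least three elements (as $2\in o_F^\times$ forces odd residue characteristic). Proposition~\ref{propRigidityForAffineBuildings} then directly yields that every affine functional on $\mathcal{I}$ is constant.

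The main obstacle is the middle step: one must carry out the case analysis over the classical types with some care, distinguishing in particular the split one-dimensional torus (the genuine obstruction isolated by the $O_2$ exclusion) from the anisotropic one-dimensional tori that can also arise among the two-dimensional orthogonal forms and that already fall under the trivial $F$-rank-zero case.
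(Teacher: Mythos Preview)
Your proposal is correct and takes essentially the same approach as the paper: both reduce to Proposition~\ref{propRigidityForAffineBuildings} by identifying $\mathcal{I}$ with the geometric realization of a thick affine building, the paper merely asserting this in one line while you supply the justification (the rank-zero case, and triviality of the split central torus away from $O_2$). One inessential quibble: thickness of the Bruhat--Tits building of a semisimple group over a non-archimedean local field holds regardless of the residue cardinality, so the appeal to $q\geq 3$ is unnecessary.
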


\begin{proof} Not being $k_0$-isomorphic to $\bO_{2}^{is},$ the unitary group $\bU(h)$ has no $k_0$-rational characters on the 
connected component of the identity implying that the non-extended and the extended buildings are equal (see \ref{propExEnlargedBTBClassGrps}). 
If $\bG$ is totally isotropic, then $\Building(G)$ is a point and otherwise
it is the geometric realization of a thick Euclidean building. Now we apply
Proposition \ref{propRigidityForAffineBuildings}.  
\end{proof}

\begin{proposition}\label{propRigidityForGLnSO2}
\be
\item A $k^\times$-invariant affine functional on $\Building(\bGL_D(V),k),$ i.e. on $\Lattone{o_D}{V},$ is constant. 
\item Every $k^\times$-invariant affine functional on
$\Building(\bO_{2}^{is},k)$ is constant. 
\ee
\end{proposition}

\begin{proof}
\be
\item Every fiber of a $k^\times$-invariant affine functional on 
$\Building(\bGL_D(V),k)$ is a union of classes of $o_D$-lattice functions.
It therefore factorizes to an affine functional on $\building(\bGL_D(V),k).$ Now we apply Proposition \ref{propRigidityForAffineBuildings}.
\item This follows from part 1, because $\Building(\bO_{2}^{is},k)$ is isomorphic to $\Building(\bG_m,k)$ via a
$k^\times$-equivariant affine bijection. 
\ee
\end{proof}
\section{The general case}

We introduce the notion of a translation in order to understand the class of affine  CLF-maps which satisfy a mild equivariance condition.

\begin{definition}
\be
\item A {\it translation}  of $\Lattone{o_D}{V}$ is a map from $\Lattone{o_D}{V}$ to itself 
of the form 
\[\Lambda\mapsto\Lambda+s\]
where $s$ is a real number.
\item A {\it translation} of $\Lattone{h}{V}$ is the identity if $\bG$ is not $k_0$-isomorphic to $\bO^{is}_{2}.$
\item A {\it translation} of $\Building(\bH,k_0)$ is a product of translations of the factors. 
\ee 
\end{definition}

\begin{remark}\label{remOis2k_0bbR}
A translation of $\Building(\bO_{2}^{is},k_0)$ is included in the definition because 
 $(\bO_{2}^{is})^0$ is $k_0$-isomorphic to $\bG_m,$ i.e. there is a natural bijection
from $\Building(\bO_{2}^{is},k_0)$ to $\Lattone{k_0}{k_0}.$
\end{remark}

Let us fix a map $j$ from $\Building(H)$ to $\Building(G)$ constructed
as in the proof of Theorem \ref{thmExistence}. In this section we are going to prove:

\begin{theorem}\label{thmCLFUnitaryCase}
If $\phi$ is an affine and $\Centr(\prod_i\bH_i^0((E_i)_0))$-equivariant CLF-map from $\Building(H)$ to $\Building(G)$ then $j^{-1}\circ\phi$ is a
translation of $\Building(H).$ In terms of lattice functions, the image of
$\phi$ is the set of self-dual $o_E$-$o_D$-lattice functions in $V$ and $\phi$
is $\prod_i\bH_i^0((E_i)_0)$-equivariant. 
\end{theorem}

The composition of $j^{-1}$ with $\phi$ is possible by the following fact.

\begin{proposition}\label{propCLFETFP}
The image of a CLF-map from $\Building(H)$ to $\Building(G)$ is a
subset of the set of $o_E$-$o_D$-lattice functions.
\end{proposition}

\begin{proof}
By Lemma \ref{lemIntersectionPropertyComp} we can assume that 
\[I_0\cup I_+=\{i\}.\] 

Case 1: ($\beta=0$) The field $E$ is $k$ and there is nothing to prove.

Case 2: ($i\in I_0$ and $\beta\neq 0$) There is only one CLF-map by Theorem \ref{thmUniquenessI+empty}
and it fulfills the assertion by Theorem \ref{thmExistence}.

Case 3: ($i\in I_+$) We choose two arbitrary points $y\in\Building(G)$ and $x\in\Building(H)$ such that $\mf{g}_y\cap \mf{h}=\mf{h}_x.$ The lattice function $\Lambda$ of $y$ splits under
$(V_i,V_{-i})$ by corollary \ref{corPsiI}. By self-duality it is sufficient 
to prove that $\Lambda\cap V_i$ is an $o_{E_i}$-lattice function. The building
\[\Building(H)=\Building(\bGL_{\tens{E_i}{k}{D}}(V_i),E_i)\]
is identified with the set of lattice functions over a skew field whose center is
$E_i.$ Thus we get
\bi
\item $(a-a^{\sigma})\in\mf{g}_y(0)$ for all $a\in
o_{E_i}^{\times},$
\item $\pi_{E_i}-\pi_{E_i}^{\sigma}\in \mf{g}_y(\frac{1}{e}),$ and 
\item $\pi_{E_i}^{-1}-(\pi_{E_i}^{-1})^{\sigma}\in
\mf{g}_y(-\frac{1}{e}),$
\ei
where $e$ is the ramification index of $E_i|k$ and $\pi_{E_i}$ is a prime
element of $E_i.$ We conclude that $1_i\Lambda$ is an $o_{E_i}$-lattice
function.
\end{proof}

\begin{proof}[of Theorem \ref{thmCLFUnitaryCase}]
Proposition \ref{propCLFETFP} enables us to define 
\[\tau:=j^{-1}\circ\phi\]
because the image of $j$ is the set of all $o_E$-$o_D$-lattice functions in $V.$

If we know that $\tau$ is a translation, then all assertions of Theorem \ref{thmCLFUnitaryCase} hold: A translation is a bijection and we conclude that $\phi$ and $j$ have the same image. 
The $\prod_i\bH_i^0((E_i)_0)$-equivariance of $\phi$ follows because $j$ and $\tau$ are $\prod_i\bH_i^0((E_i)_0)$-equivariant. 

We denote the coordinates of $\tau$ by $\tau_i,$ $i\in I_0\cup I_+.$ We need two 
steps to show that $\tau$ is a translation. Let us fix $i\in I_0\cup I_+.$

Step 1: We prove that  the coordinate $\tau_i$ only depends on $x_i.$

Case 1.1: $i\in I_0,$ such that $\bO_{2}^{is}$ is not $(E_i)_0$-isomorphic to $\bH_i$. By Theorem \ref{thmUniquenessI+empty} we have $\tau_i(x)=x_i,$ for all $x\in \Building(H).$  

Case 1.2: We assume that we have an index $i\in I$ such that $\bH_i$ is
$(E_i)_0$-isomorphic to $\bO_{2}^{is}$, whose building is affinely isomorphic to $\bbR$ by Remark \ref{remOis2k_0bbR}.
 If we fix an index $t\in (I\cup I_+)\setminus\{i\}$ and coordinates $x_l$ for $l\in
 (I\cup I_+)\setminus\{t\}$, then the map 
\[x_t\mapsto \tau_i(x)\]
is constant by Proposition \ref{propRigidityForClassicalGroups} or
\ref{propRigidityForGLnSO2}. Thus $\tau_i$ does not depend on $x_t.$

Case 1.3: $i\in I_+.$ Let us fix $x\in \Building(H)$. The lattice functions $\Lambda_{\tau_i(x)}$ and $\Lambda_{x_i}$ are equivalent by the CLF-property of $\tau$. We define $a_i(x)$ to be the real number such that 
\[\Lambda_{\tau_i(x)}=\Lambda_{x_i}+a_i(x).\]
The map $a_i$ is affine, since $\tau_i$ is. By an analogous argument as in 
case 1.2 we have that $a_i$ does not depend on the $t$th coordinate, for $t\in (I_+\cup I_0)\setminus\{i\}$. 

Step 1 allows us to define a map $\tilde{\tau}_i$ from $\Building(\bH_i,(E_i)_0)$
to itself by
\[\tilde{\tau}_i(x_i):=\tau_i(x),\ x\in \Building(H).\]

Step 2: Here we show that $\tilde{\tau}_i$ is a translation. 
We firstly consider an index $i\in I_0$ such that $\bH_i$ is $(E_i)_0$-isomorphic to $\bO_{2}^{is}.$ In this case we have $k=k_0$ and $E_i=1_ik.$ We identify $\Building(\bO_{2}^{is},k)$ with $\bbR.$ The $\bSO_{2}^{is}(k)$-equivariance of $\tau_i$
gives
\[\tilde{\tau}_i(x_i+1)=\tilde{\tau}_i(x_i)+1.\] 
The affineness property implies that $\ti{\tau}_i$ is a translation.
For $i\in I_+$ the map $a_i$ in case 3 of step 1 is an affine
functional and the $k^{\times}$-equivariance of $\tau_i$ implies the
$k^{\times}$-invariance of $a_i,$ because one gets in terms of lattice functions
\begin{align*}
\Lambda\pi_k+a_i(\Lambda\pi_k)&= \ti{\tau}_i(\Lambda\pi_k)\\
&= (\ti{\tau}_i(\Lambda))\pi_k\\
&= \Lambda+a_i(\Lambda)-1\\
&= \Lambda\pi_k+a_i(\Lambda),
\end{align*}
where $\pi_k$ is a prime element of $k$.
Thus $a_i$ is constant by Proposition \ref{propRigidityForGLnSO2}.
\end{proof}

\section{Torality}

In this section we want to prove that the map constructed in section \ref{secCLFExistence} respects apartments and is  toral if $\beta$ is separable.

\begin{definition}
A map 
\[f:\ \Building(G_1,k_0)\ra\Building(G_2,k_0)\]
between two extended buildings of reductive groups defined over $k_0$ is called \textit{toral}\/ if, for each maximal $k_0$-split
torus $S$ of $G_1$, there is a maximal $k_0$-split torus $T$ of $G_2$ containing $S$ such that
$f$ maps the apartment corresponding to $S$ into the apartment corresponding to $T.$ 
An analogous definition applies to maps between non-extended buildings.
\end{definition}

Maximal $k_0$-split tori of $\bG$ can be characterized in terms of Witt decompositions. Given a Witt decomposition  $\{W_l\mid l\in L\}$ of $V$, there is exactly one maximal $k_0$-split torus $T$ of $\bG$ which satisfies 
\[\Centr_{\bG}(T)(k_0)=\{g\in\bG(k_0)\mid g(W_l)\subseteq W_l,\ l\in L,\ (g-\id_V)(W_0)\subseteq \{0\}\}.\] We recall, that 
$W_0$ is the orthogonal complement of the sum of all $W_l.$
Every maximal $k_0$-split torus arises in this way, because they are conjugate to each other by elements of $\bG(k_0).$
The $D$-vector spaces $W_l,\ l\in L,$ are exactly the irreducible $T(k_0)$-invariant $D$-subspaces 
of the orthogonal complement of 
\[\{v\in V\mid\ t(v)=v, t\in T(k_0)\}.\]
Analogously one shows that the set of maximal $k$-split tori of $\bGL_D(V)$ is 
one-to-one correspondence with the set of all decompositions of $V$ into one-dimensional $D$-subspaces. 

\begin{lemma}\label{lemPsiItoral}
The map $\psi_I$ from $\Building(\prod_i\bG_i,k_0)$ to $\Building(\bG,k_0)$ defined
in Proposition \ref{propFactOfCLFMap} by 
\[\psi_I((\Lambda_i)_{i\in I_0\cup I_+}):=\bigoplus_{i\in I_0\cup I_+}\Lambda_i\]
is toral. 
\end{lemma}

\begin{proof}
For $i\in I_0\cup I_+,$ let $S_i$ be a maximal $k_0$-split torus of $\bG_i$ and 
$\{W_l^i\mid\ l\in L_i\}$ be the corresponding Witt decomposition of $V_i+V_{-i}.$
Further let $\Delta_i$ be the apartment of $S_i$ in $\Building(\bG_i,k_0).$ Let $\alpha_i$ be an element of $\Delta_i$ seen as a self dual $D$-norm on $V_i+V_{-i}.$  By \cite[2.9]{bruhatTitsIV:87} the norm 
\[\alpha:=\sum_i\alpha_i\]
has the form
\[\alpha(\sum_i w_0^i)=\frac12\inf_i(\nu(h(w_0^i,w_0^i)))\]
on $V_a:=\sum_iW_0^i,$ i.e. this form does not depend on $(\alpha_i)_i.$
Let us take a splitting Witt decomposition 
$\{W^a_l\mid\ l\in L_a\}$ of $V_a$ for the restriction of $\alpha$
to $V_a.$ Then the torus $\prod_i S_i$ is a subtorus of 
the maximal $k_0$-split torus $T,$ which corresponds to the Witt decompostion 
\[\bigcup_{i\in I_0\cup I_+\cup\{a\}}\{W_l^i\mid l\in L_i\}\] of $V,$ and $\psi_I$ maps 
$\prod_i\Delta_i$ into the apartment of $T.$
\end{proof}

\begin{proposition}\label{propToralityOfj}
The map $j$ constructed in the proof of Theorem \ref{thmExistence} maps apartments into apartments. Furthermore, $j$ is toral if $\beta$ is separable.
\end{proposition}

Without loss of generality we may assume that $I_0\cup I_+=\{i\}$ by Lemma \ref{lemPsiItoral}.

\begin{proof}[$i\in I_+$]
The map $\ti{j}^{E_i}$ of Theorem \ref{thmBL02II.3.1II.4} from $\Building(H)$ to 
\[\Building(\bGL_D(V_i),k)=\Building(\Res_{k|k_0}(\bGL_D(V_i)),k_0)\]
maps apartments into apartments and is toral if $\beta_i$ is separable by \cite[5.1]{broussousLemaire:02}.
We prove that the canonical map $\phi$, see (\ref{eqEmbedBuild}), from $\Building(\Res_{k|k_0}(\bGL_D(V_i)),k_0)$ to $\Building(\bG,k_0)$ defined by
\[\Lambda\in\Lattone{o_D}{V_i}\mapsto (\Lambda\oplus\Lambda^{\#_{-i}})\in\Lattone{h}{V}\]  is toral.
A maximal $k$-split torus $S$ of $\bGL_D(V_i)$ corresponds to a decomposition of $V_i$ in one-dimensional $D$-subspaces, i.e. there is a decomposition $V_i=\bigoplus_lV_{i,l}$ such that 
\[ S(k)=\Centr(\{g\in\GL_D(V_i)|\ g(V_{i,l})\subseteq V_{i,l} \text{ for all } l\}).\]
Let $V_{-i,j}$ be the subspace of $V_{-i}$ dual to $V_{i,j},$ i.e.
\[V_{-i,j}:=\{v\in V_{-i}\mid\ h(v,V_{i,k})=\{0\}, k\ne j\},\]
 and let $T$ be the torus given by the decomposition 
\[V=\bigoplus_l(V_{i,l}\oplus V_{-i,l}).\]
Under the map (\ref{eqEmbedGroup}), from $\Aut_D(V_i)$ to $\Aut_D(V_i)\oplus\Aut_D(V_{-i})$ defined by 
\[g\in\Aut_D(V_i)\mapsto (g,0)+\sigma((g^{-1},0))\in\Aut_D(V_i)\oplus\Aut_D(V_{-i})\]
the set $S(k)$ is mapped into $T(k)$ and under $\phi$ the apartment of $S$ is mapped into the apartment of $T.$
Let $S'$ and $T'$ be the maximal $k_0$-split 
sub-tori of $\Res_{k|k_0}(S)$ and $\Res_{k|k_0}(T)$ respectively. The set $S'(k_0)$ is mapped into \mbox{$(T'\cap\bG)(k_0)$} under 
(\ref{eqEmbedGroup}).  The image of 
$\phi$ only consists of self-dual lattice functions. Hence 
$\phi$ seen as a map 
from $\Building(\Res_{k|k_0}(\bGL_D(V_i)),k_0)$ to $\Building(\bG,k_0)$ is toral.  
\end{proof}

From now on we assume $i\in I_0.$ Here we need the notion of tori adapted to a decomposition of $V.$

\begin{definition}
Assume we are given a decomposition 
\[V=V'^+\oplus V'^-\oplus V'^{0}\]
such that $V'^+$ and $V'^-$ are maximal totally isotropic and $V'^+\oplus V'^-$ is orthogonal to $V'^{0}$ with respect to $h.$
A maximal $k_0$-split torus $T$ of $\bG$ is \textit{adapted to } $(V'^+,V'^-,V'^0)$ if there is a Witt decomposition $\{W_l|\ l\in L\}$ corresponding to $T$ with anisotropic part $V'^0$ such that 
\[\bigoplus_l (W_l\cap V'^+)=V'^+\text{ and } \bigoplus_l (W_l\cap V'^-)=V'^-.\]
We say that an apartment of $\Building(G)$ is \textit{adapted} to $(V'^+,V'^-,V'^0)$ if every lattice function in this apartment is split by $(V'^+,V'^-,V'^0).$
\end{definition}

\begin{proof}[$i\in I_0$]
We have $E=E_i.$ There are a central division algebra $\Delta$ over $E$ and
 a finite-dimensional right vector space $W$ such that  $\End_{\tens{E}{k}{D}}(V)$ is $E$-algebra isomorphic 
to $\End_{\Delta}(W).$ We identify the $E$-algebras $\End_{\tens{E}{k}{D}}(V)$ and $\End_{\Delta}(W)$ via a fixed isomorphism and we fix a signed hermitian form $h_E$ which corresponds to the restriction $\sigma_E$ 
of $\sigma$ to the $E$-algebra $\End_{\Delta}(W).$ Let $r$ be the Witt index of $h_E.$ We fix a decomposition 
\beq\label{eqDirSumW}
W=(W^+ \oplus W^{-})\oplus W^0
\eeq
such that $W^+$ and $W^{-}$ are maximal isotropic subspaces of $W$ contained in the orthogonal complement of $W^0.$
 Let $e_+, e_-$ and $e_0$ be the projections to the vector spaces $W^+,W^-$ and $W^0$ via the direct sum (\ref{eqDirSumW}). We define 
\[ V^+:=e^+V,\ V^-:=e^-V\text{ and }V^0:=e^0V.\]
Consider the following diagram.

\[\begin{matrix}
\Building(H) & \leftarrow  &\Building(\bU((h_E)|_{W^0\times W^0}), E_0)\times\Building(\bGL_{\Delta}(W^+),E) & \ra &\building(\bGL_{\Delta}(W^+),E)  \\
\downarrow j &  & \downarrow \alpha & & \da \\
\Building(G) & \leftarrow & \Building(\bU(h|_{V^0\times V^0}),k_0)\times\Building(\bGL_D(V^+),k) &\ra & \building(\bGL_D(V^+),k)\\
\end{matrix}\]
where the vertical maps are induced by $j.$ The right horizontal maps 
map a pair $(x,\Lambda)$ to the class of $\Lambda.$ The right vertical arrow satisfies the CLF-property and its image only consists of $E^{\times}$-fixed points of $\building(\bGL_D(V^+),k),$ both properties inherited from $j.$ Thus the map in the right column is $j^E,$ i.e. the inverse of $j_E,$ because otherwise we could construct a CLF-map from 
$\building(\bGL_D(V^+),k)^{E^{\times}}$ to $\building(\bGL_{\Delta}(W^+),E)$ different from $j_E,$ but such a CLF-map is unique by \cite[II.1.1.]{broussousLemaire:02}. Now $j^E$ maps apartments into apartments which implies that $j$ maps apartments adapted to $(W^+,W^-,W^0)$ into apartments adapted to $(V^+,V^-,V^0).$ 

We now prove that $j$ is toral if $E|k$ is separable. Let us assume that $E|k$ is separable. This implies that the right column $j^E$ is toral by \cite[5.1]{broussousLemaire:02} which implies the torality of $\alpha$ because the only maximal $E_0$-split torus of the anisotropic group $\bU((h_E)|_{W^0\times W^0})$ is the trivial group. The torality of $\alpha$ implies the torality of $j$ on tori adapted to $(W^+,W^-,W^0).$ Hence $j$ is toral because the triple $(W^+,W^-,W^0)$ was chosen arbitrarily.
\end{proof}
\appendix
\section*{Appendix}
\section{The centralizer of a separable Lie algebra element}\label{appendixCentralizer}

We prove the representation of the centralizer as a product of general linear groups and Weil restrictions of unitary groups if $\beta$ is separable.
We still rely on the notation from section \ref{secNotation}. For more details about the Weil restriction we recommend \cite[1.3.]{weil:82} and \cite[20.5.]{knus:98}.
The definitions of $D'_i$ and $V'_i$ are given after notation \ref{notActionOfSigmaOnI}.

\begin{proposition}\label{propResScalars}
Let us assume that $\beta$ is separable. The centralizer $\Centr_{\bG}(\beta)$ is $k_0$-isomorphic to 
\[\prod_{i\in I_0}\Res_{(E_i)_0|k_0}(\bU(\sigma|_{\End_{\tens{E_i}{k}{D}}(V_i)}))
\times\prod_{i\in I_+}\Res_{(E_i)_0|k_0}(\bGL_{D'_i}(V'_i));\] 
in particular it is reductive and defined over $k_0.$ 
\end{proposition}

Before we come to the proof, we need some preparations on restriction of scalars.

\begin{lemma}\label{lemResScalarsFromTensorProduct}
 Let $D'$ be a skew field of finite index such that the center, denoted by $E,$ is a non-Archimedean local field.  Let $V'$ be a finite-dimensional 
right $D'$-vector space. Assume further that $\sigma'$ is an involution on $\End_{D'}(V')$ whose set of fixed points
in $E$ is $E_0.$ Let $k_0$ be a subfield of $E_0$ such that $E_0|k_0$ is separable and finite. Then $\bU(\tens{\sigma'}{k_0}{\id_{\Omega}})$ is $k_0$-isomorphic to $\Res_{E_0|k_0}(\bU(\sigma'))$ 
\end{lemma}

\begin{remark}
If $\bV$ is an affine variety defined over $E_0$ there is an isomorphism
\[\Res_{E_0|k_0}(\bV)\cong \prod_{\gamma}\bV^{\gamma},\] 
defined over the normal hull of $E_0|k_0$, where $\gamma$ passes through the set $\Gamma$ of all field embeddings $E_0\hookrightarrow \Omega$ fixing $k_0$, and one obtains $\bV^{\gamma}$ from $\bV$ in the following way. We choose an automorphism $\bar{\gamma}$ of $\Omega$
whose restriction to $E_0$ is $\gamma.$ Let $P_1,\ldots,P_l$ be polynomials in $n$ variables with coefficients in $E_0$ such that 
\[\bV:=\{x\in\Omega^{t}|\ P_i(x)=0,\ i=1,\ldots,l\},\] for some $t\in\bbN.$
One defines 
\[\bV^{\gamma}:=\{(\bar{\gamma}(x_j))_{1\leq j\leq t}|\ x\in\bV\}
=\{x\in\Omega^t |\ P_i^{\gamma}(x)=0,\ i=1,\ldots,l\},\]
where $P_i^{\gamma}$ is the polynomial obtained from $P_i$ by applying $\gamma$ to the coefficients.
\end{remark}

\begin{proof}[of Lemma \ref{lemResScalarsFromTensorProduct}]
The $\Omega$-algebra $\tens{\End_{D'}(V')}{k_0}{\Omega}$ is canonically isomorphic to 
$\tens{\End_{D'}(V')}{E_0}{\tens{E_0}{k_0}{\Omega}}$ and  $\tens{E_0}{k_0}{\Omega}$ is isomorphic to 
$\Omega^{[E_0:k_0]}$ via
\[\tens{e}{k_0}{w}\mapsto (\gamma(e)w)_{\gamma\in\Gamma}.\] 
We denote by $\Omega^{\gamma}$ the left $E_0$-vector space $\Omega$ under the action
\[e.\omega:=\gamma(e)\omega.\]
Thus 
\[\tens{\End_{D'}(V')}{E_0}{\tens{E_0}{k_0}{\Omega}}\cong \prod_{\gamma}\tens{\End_{D'}(V')}{E_0}{\Omega^{\gamma}}.\]
The fact that $\sigma'$ fixes $E_0$ implies that $\tens{\sigma'}{k_0}{\id_{\Omega}}$ is the product of involutions
$\tens{\sigma'}{E_0}{\id_{\Omega^{\gamma}}}.$ It is enough to prove 
\[\bU(\tens{\sigma'}{E_0}{\id_{\Omega^{\gamma}}})=\bU(\tens{\sigma'}{E_0}{\id_{\Omega}})^{\gamma}.\]
To show the latter, we fix $\gamma$ and we choose an extension $\bar{\gamma}$ to $\Omega.$ The ring isomorphism
\[\Phi:\ \tens{\End_{D'}(V')}{E_0}{\Omega}\ra  \tens{\End_{D'}(V')}{E_0}{\Omega^{\gamma}}\]
sending $\tens{f}{E_0}{\omega}$ to $\tens{f}{E_0}{\bar{\gamma}(\omega)}$ 
satisfies 
\[\Phi\circ (\tens{\sigma'}{E_0}{\id_{\Omega}})=(\tens{\sigma'}{E_0}{\id_{\Omega^{\gamma}}})\circ\Phi.\]
Thus an element $g$ of $\tens{\End_{D'}(V')}{E_0}{\Omega}$ lies in $\bU(\tens{\sigma'}{E_0}{\id_{\Omega}})$ if and only 
if $\Phi(g)$ lies in $\bU(\tens{\sigma'}{E_0}{\id_{\Omega^{\gamma}}})$, which proves the lemma. 
\end{proof}

\begin{proof}[of Proposition \ref{propResScalars}]
Without loss of generality we assume that $I_0\cup I_+=\{i\}.$ 

Case 1: $I_0=\{i\}.$ We fix an algebraic closure $\Omega$ of $E.$ We have
\begin{align*} 
\Centr_{\tens{\End_D(V)}{k_0}{\Omega}}(\tens{\beta}{k_0}{1})& \cong\ \tens{\Centr_{\End_D(V)}(\beta)}{k_0}{\Omega}\\
&\cong\ \tens{\End_{D'_i}(V'_i)}{k_0}{\Omega}.
\end{align*}
The involution defining $\bU(\sigma)$ on  $\tens{\End_{D}(V)}{k_0}{\Omega}$ is 
$\tens{\sigma}{k_0}{\id}.$ The above $\Omega$-algebra isomorphism defines an involution $\tens{\sigma'_i}{k_0}{\id}$
on the right side of the equation where $\sigma'_i$ is an involution on $\End_{D'_i}(V'_i)$ whose set of fixed points in
$E$ is $E_0.$ By Lemma \ref{lemResScalarsFromTensorProduct}
 the group $\U(\tens{\sigma'_i}{k_0}{\id})$ is the Weil restriction of $\bU(\sigma'_i)$
from $E_0$ to $k_0.$ 

Case 2: $I_+=\{i\}$. The algebra $E$ is a direct product of two fields $E_i$ and $E_{-i}$ and we define
$E_0$ to be the set of fixed points of $\sigma$ in $E.$ In the following part of the proof
we use the bijections
\[E_i\ra E_0\ra E_{-i},\ e_i\mapsto e_i+\sigma(e_i)\mapsto \sigma(e_i).\]
We fix an algebraic closure $\Omega$ of $E_0.$
As in the proofs of Lemma \ref{lemResScalarsFromTensorProduct} and  case 1, we get the product decomposition
\begin{align*} 
\Centr_{\tens{\End_D(V)}{k_0}{\Omega}}(\tens{\beta}{k_0}{1})& \cong\ \tens{\Centr_{\End_D(V)}(\beta)}{k_0}{\Omega}\\
&\cong\ \tens{(\End_{D'_i}(V'_i)\oplus (\End_{D'_i}(V'))^{op})}{k_0}{\Omega}\\
&\cong\ \prod_{\gamma}(\tens{(\End_{D'_i}(V'_i)\oplus (\End_{D'_i}(V'_i))^{op})}{E_0}{\Omega^{\gamma}})\\
&\cong\ \prod_{\gamma}((\tens{\End_{D'_i}(V'_i)}{(E_i)_0}{\Omega^{\gamma}})\oplus (\tens{\End_{D'_i}(V'_i)}{(E_{-i})_0}{\Omega^{\gamma}})^{op}).\\
\end{align*}
The unitary group 
\[\{g\in (\tens{\End_{D'_i}(V'_i)}{E_i}{\Omega})\oplus (\tens{\End_{D'_{-i}}(V'_{-i})}{(E_{-i})_0}{\Omega})^{op}|\ 
g(\tens{\sigma}{E_0}{\id_{\Omega}})(g)=1\}\]
is $E_i$-isomorphic to $\bGL_{D'_i}(V'_i).$ We conclude as in the proof of Lemma \ref{lemResScalarsFromTensorProduct}.
\end{proof}

\section{Rational characters}
In this section we show that the extended and the non-extended buildings of $G$ are equal 
in almost all cases. The following statement is common knowledge, but we give a prove 
for the sake of completeness. Let $X^*(?)_{k_0}$ denote the set of $k_0$-rational characters of $?$. Let us recall that by definition $\Building(G)$ is different from $\building(G)$ if and only if the set $X^*(\bG^0)_{k_0}$ is non-trivial (see \cite[4.2.16]{bruhatTitsII:84}). 

\begin{definition}
The {\it special unitary group} $\bSU(h)$ of $h$ is a connected reductive group defined over $k_0,$ whose set of rational points, which we denote by $\SU(h)$, is the intersection of $\bU(h)(k_0)$ with the kernel of the reduced norm on $\End_D(V).$
\end{definition}

\begin{remark}\cite[2.15]{platonovRapinchuk:94}\label{remConnectedCompG}
\be 
\item If $\sigma$ is a unitary involution, then $\bG$ is a $k$-form of $\bGL_{md}(\bar{k}).$
\item If $\sigma$ is symplectic, then $\bG$ is $k$-isomorphic to $\bSU(h)$ which is a $k$-form of the symplectic group $\bSp_{md}(\bar{k}),$ in particluar $\bG$ is semisimple.
\item If $\sigma$ is orthogonal, then $\bG^0$ is $k$-isomorphic to $\bSU(h)$ which is a $k$-form of the special orthogonal group $\bSO_{md}(\bar{k}),$ in particular $\bG^0$ is semisimple if $md\neq 2.$ 
\ee
\end{remark}

We further need the following theorem.

\begin{proposition}\label{propExEnlargedBTBClassGrps}
$X^*(\bG^0)_{k_0}\neq 1$ if and only if 
\beq\label{eqCondO2is}
m=2 \text{ and } d=1 \text{ and } \sigma \text{ is orthogonal and } h \text{ is isotropic. }
\eeq
\end{proposition}

\begin{lemma}\label{lemMatrixEmbedding}
Let $L|L'$ be a field extension and $n\in\mathbb{N}.$ Let $\bar{L}$ be an algebraic closure of $L.$ 
Let $\phi$ be an $L'$-algebra monomorphism from $\Matr_n(L)$ into $\Matr_n(\bar{L}).$ 
Then there is an element $\psi$ in $\Gal(\bar{L}|L')$ inducing an $L'$-algebra automorphism of $\Matr_n(\bar{L})$ via
\[\Psi((a_{ij})_{ij}):=(\psi(a_{ij}))_{ij},\]
such that $\Psi\circ\phi$ is an $L$-algebra monomorphism from $\Matr_n(L)$ into $\Matr_n(\bar{L}).$  
\end{lemma}

\begin{proof}
The map from $\tens{\Matr_n(L')}{L'}{\bar{L}}$ to $\Matr_n(\bar{L})$ defined by
\[\tens{x}{L'}{y}\mapsto \phi(x)y\]
is surjective, and thus $\im(\phi)$ contains a $\bar{L}$-basis of $\Matr_n(\bar{L}).$ In particular $\phi(L)$ is a subset of the
center $\bar{L}$ of $\Matr_n(\bar{L}).$ Now choose $\psi\in\Gal(\bar{L}|L')$ such that $\psi^{-1}|_L$ equals $\phi.$ 
\end{proof}

\begin{proof}[\ref{propExEnlargedBTBClassGrps}]
A semisimple group is perfect and has therefore no characters. By remark \ref{remConnectedCompG} the only cases left are:
\be
\item $\sigma$ is unitary.
\item $\sigma$ is orthogonal and $md=2$ and not (\ref{eqCondO2is}).
\item Situation (\ref{eqCondO2is}).
\ee

Case 1: We have $D=k$ by Remark \ref{remd12}. There is an isomorphism of $\bar{k}$-algebras with involution 
\[(\tens{\End_k(V)}{k_0}{\bar{k}},\tens{\sigma}{k_0}{\id_{\bar{k}}})\cong (\Matr_m(\bar{k})\times\Matr_m(\bar{k}),\tilde{\sigma})\]
with 
\[\tilde{\sigma}(B,C)=(C^T,B^T),\] 
and we have $\bar{k}$-isomorphisms
\[\bG=\U(\tens{\sigma}{k_0}{\id_{\bar{k}}})\cong\U(\tilde{\sigma})\cong\GL_m(\bar{k}).\]
The last isomorphism is induced by the projection to the first 
coordinate. 
Let $\chi$ be a $k_0$-rational character of $\bG$.
A character of $\GL_m(\bar{k})$ is a power of the determinant. Thus, because of Lemma \ref{lemMatrixEmbedding}, $\chi|_{G}$ must be a power of $\det|_{G}$ or 
$\rho\circ\det|_{G}.$
We now fix a basis of $V$ to get a $k$-isomorphism from $\Matr_m(k)$ to $\End_k(V).$
The involution $\sigma\circ ()^{\rho}$ is conjugate to the transpose map, which implies 
\[\chi(x)^{-1}=\chi(\sigma(x))=\rho(\chi(x))=\chi(x),\] for all 
$x\in G.$ The last equality follows from $\chi(G)\subseteq k_0$. In particular the only possible values of $\chi$ on $G$ are 1 and $-1.$
Thus $\chi$ is trivial, because $\bG$ is connected and $G$ is Zariski-dense in $\bG$ by \cite[18.3]{borel:91}. 

Case 2: We have $k=k_0,$ since $\sigma$ is orthogonal. If $d=2$ There is an element $a\in\SU(h)\setminus\{1,-1\},$ because $\SU(h)$ is Zariski-dense in $\bSU(h)$ by \cite[18.3]{borel:91}. We have that $k[a]$ is its own centralizer in $D,$ because the index of $D$ is two, in particular the commutative group $\SU(h)$ is a subset of $k[a]$. if we introduce a $k$-basis of $D$ which contains 
$1$ and $a$, then the identity from  $\SU(h)$ to $\U(\sigma|_{k[a]})$ can be extended to a $k$-isomorphism from $\bSU(h)$ to $\U(\sigma|_{k[a]}).$ By case 1 there is no $k$-rational character on $\bSU(h).$

Let us now assume $d=1$ and $\bSU(h)$ is anisotropic. There is a $k$-basis of $V$ such that the Gram matrix of 
$h$ is of the form 
\[\zzmatrix{e}{0}{0}{f},\]
and we identify $\End_k(V)$ with $\Matr_2(k).$ A short calculation shows that 
\[\bSU(h)=\left\{\zzmatrix{a}{cf}{-ce}{a}\mid a,c\in\bar{k}\ s.t.\ a^2+efc^2=1\right\}.\]
We fix square roots $\sqrt{e}$ and $\sqrt{-f}.$
The conjugation with
\[\zzmatrix{\sqrt{e}}{\sqrt{-f}}{\frac{\sqrt{e}}{2}}{-\frac{\sqrt{-f}}{2}}\]
maps $\bSU(h)$ to $\bSO_2^{is}$. The explicit formula for the map is 
\[\zzmatrix{a}{cf}{-ce}{a}\mapsto \zzmatrix{a-c\sqrt{-ef}}{0}{0}{a+c\sqrt{-ef}},\ \sqrt{-ef}:=\sqrt{-e}\sqrt{f}.\]
Thus a character of $\bSU(h)$ is of the form 
\[\zzmatrix{a}{cf}{-ce}{a}\mapsto (a+c\sqrt{-ef})^z,\]
for some integer $z.$ The inverse of $(a+c\sqrt{-ef})$ is $(a-c\sqrt{-ef}).$ If $z$ is positive, we apply the binomial expansion to get coeffitients $\alpha$ und $\gamma$ in $k$ such that
\[(a+c\sqrt{-ef})^z=\alpha+\gamma\sqrt{-ef}.\] The element $\gamma$ is zero, because $\sqrt{-ef}\notin k$ since $h$ is anisotropic. Thus a $k$-rational character $\chi$ of $\bSU(h)$ satisfies 
\[\chi(x)=\chi(x^{-1}),\]
for all $x\in \SU(h).$ The density of $\SU(h)$ in $\bSU(h)$ and the connectivity of $\bSU(h)$ imply that $\chi$ is trivial.

Case 3: Here $\bG$ is $k_0$-isomorphic to $\bO_2^{is}$ implying that $\bG^0$ has non-trivial $k_0$-rational characters.
\end{proof}

\bibliographystyle{alpha}
\bibliography{skodlerack_bibliography}
\end{document}